\documentclass[11pt]{amsart}
\usepackage{amsmath, amsthm}
\usepackage{amssymb}
\usepackage{amscd}
\usepackage{url}
\usepackage{graphicx}
\usepackage{color}



\numberwithin{equation}{section}

\theoremstyle{plain}

\newtheorem{theorem}[subsection]{Theorem}
\newtheorem{proposition}[subsection]{Proposition}
\newtheorem{lemma}[subsection]{Lemma}
\newtheorem{corollary}[subsection]{Corollary}

\theoremstyle{definition}
\newtheorem{definition}[subsection]{Definition}

\newcommand{\Q}{\mathbb{Q}}
\newcommand{\Z}{\mathbb Z}
\newcommand{\C}{\mathbb C}

\newcommand{\R}{\mathbb{R}}

\title{The H\"older exponent of some Fourier series}
\author{Fernando Chamizo}
\author{Izabela Petrykiewicz}
\author{Seraf\'{\i}n Ruiz-Cabello}

\address[F. Chamizo]{Department of Mathematics, Universidad Aut\'{o}noma de Madrid and ICMAT, 28049 Madrid, Spain}
\email{fernando.chamizo@uam.es}
\thanks{The first and the third authors are partially supported by the grant MTM2011-22851 of the Ministerio de Ciencia e Innovaci\'{o}n (Spain).}

\address[I. Petrykiewicz]{Max Planck Institute for Mathematics, Vivatsgasse 7, 53111 Bonn, Germany}
\email{petrykii@mpim-bonn.mpg.de}

\address[S. Ruiz-Cabello]{Department of Mathematics, Universidad Aut\'{o}noma de Ma\-drid, 28049 Madrid, Spain}
\email{serafin.ruiz@uam.es}

\subjclass[2010]{42A16, 26A16, 11F30, 28A80}

\keywords{H\"older exponent, Fourier series, automorphic forms, fractional integrals}

\begin{document}

\begin{abstract}
In this paper we study the local regularity of fractional integrals of Fourier series using several definitions of the H\"older exponent. We especially consider series coming from fractional integrals of modular forms. Our results show that in general cusp forms give rise to pure fractals (as opposed to multifractals). We include explicit examples and computer plots.
\end{abstract}

\maketitle


\section{Introduction}

There are many ways of classifying a continuous function according to its regularity giving rise to several definitions of function spaces. Some of them can be adapted to study the regularity at a single point. 
For instance, the classical Lipschitz spaces $\Lambda^s$ (see \cite{zygmund}) lead to define for $0<s\le 1$
\begin{equation}\label{lipschitz}
\Lambda^s(x_0)=
\big\{f\text{ continuous function}\;:\; 
\big|f(x)-f(x_0)\big|=O\big(|x-x_0|^s\big)\big\}.
\end{equation}

The following extension to any $s>0$ is important in multifractal analysis:

\begin{definition}
Given $x_0\in\R$, we define $C^{s}(x_0)$, with $s\ge 0$, as the set of continuous functions
$f:\R\longrightarrow\C$ such that there exists a polynomial $P$ of degree at most $s$ satisfying
\begin{equation}\label{cs}
 \big|f(x)-P(x-x_0)\big|=O\big(|x-x_0|^s\big)
\qquad\text{when }x\to x_0.
\end{equation}
For a fixed $f$ we define the \emph{pointwise H\"older exponent} of $f$ at $x_0$ as
\begin{equation}\label{phe}
 \beta(x_0)=\sup\big\{s\ge 0\;:\; f\in C^{s}(x_0) \big\}.
\end{equation}
\end{definition}

Clearly $C^s(x_0)=\Lambda^s(x_0)$ for $0<s\le 1$. 
One is tempted to consider $P$ as the Taylor polynomial but, as we shall see later, $f\in C^s(x_0)$ for $s$ large does not even assure the existence of $f''(x_0)$.
 
In analysis, a more common extension of the Lipschitz spaces $\Lambda^s$ are the H\"older spaces $C^{k,s}$ where $k$ is a nonnegative integer and $0<s\le 1$. 
This is the space of continuous functions having continuous derivatives up to order $k$ and such that $f^{(k)}\in \Lambda^s$. This leads to a more naive way of defining the regularity at a point.

\begin{definition}
Let $C^{k,s}(x_0)$ be the set of continuous functions $f:\R\longrightarrow\C$ such that $f^{(k)}$ is continuous in an open interval $I$ containing $x_0$ and such that $f^{(k)}\in\Lambda^s(x_0)$.
Given a continuous function $f$ we define the \emph{restricted local H\"older exponent} of $f$ at $x_0$ as
\begin{equation}\label{rlhe}
 \beta^*(x_0)=\sup\big\{k+s\;:\; f\in C^{k,s}(x_0),\ k\ge 0,\ 0<s\le 1 \big\}.
\end{equation}
\end{definition}

In some sense, the definition of $\beta^*$ is like imposing in \eqref{cs} that $P$ is the actual $k$-th order Taylor polynomial.

Finally, if the localization is thought in the topological or analytic sense as a kind of limit of open neighborhoods, then it is better to avoid \eqref{lipschitz} and to employ $\Lambda^s$ as it appears in the definition of $C^{k,s}$
but restricted to an open set. In connection with this, for an open set $\mathcal{U}\subset\R$, we denote as usual by $C^{k,s}(\mathcal{U})$ the set of $k$-differentiable functions such that 
$\big|f^{(k)}(x)-f^{(k)}(y)\big|=O\big(|x-y|^s\big)$ for every $x,y\in\mathcal{U}$.
The definition extends to $s=0$ meaning the set of functions with $k$ continuous derivatives and no extra conditions.

\begin{definition}
Let $I_1\supset I_2\supset\, I_3\supset\ldots$ be a sequence of open nested intervals in $\R$ such that $\bigcap I_n = \{x_0\}$. 
Given a continuous function $f$ we define the \emph{local H\"older exponent} of $f$ at $x_0$ as
\begin{equation}\label{lhe}
 \beta^{**}(x_0)=
\lim_{n\to\infty}
\sup\big\{k+s\;:\; f\in C^{k,s}(I_n),\ k\ge 0,\ 0\le s\le 1 \big\}.
\end{equation}
\end{definition}

It is not difficult to see that $\beta^{**}(x_0)$ does not depend on the choice of the nested intervals \cite{SeVe}.

\

The difference between these definitions becomes apparent when considering chirp functions. For instance, take $f(x)=x^4\sin(x^{-2})$ for $x\ne 0$ and $f(0)=0$.
We can choose $P$ identically zero and $s=4$ in \eqref{cs}, and no greater values of $s$, then $\beta(0)=4$. The derivative, $f'(x)=4x^3\sin(x^{-2})-2x\cos(x^{-2})$ for $x\ne 0$ and $f'(0)=0$, is clearly nondifferentiable at $x_0=0$. 
On the other hand $f'\in\Lambda^1(0)$, in particular $f\in C^{1,1}(0)$ and $\beta^*(0)=2$.
The definition of $\beta^{**}$ is more demanding because $f'$ has to belong locally to a $\Lambda^s$. In our case choosing $x_n=\big((n+1)\pi\big)^{-1/2}$ and $y_n=(n\pi)^{-1/2}$, 
we have $\big|f'(x_n)-f'(y_n)\big|>C n^{-1/2}$ for some $C>0$. Hence $\big|f'(x_n)-f'(y_n)\big|\ne o\big(|x_n-y_n|^{1/3}\big)$ and $f'\not\in \Lambda^s$ for $s>1/3$. 
In fact it is not difficult to see that this is the limiting case and~$\beta^{**}(0)=4/3$.

\

Comparing the definitions, the following result is straightforward:
\begin{lemma}\label{compare}
For a continuous function $f:\R\longrightarrow\C$ consider the H\"older exponents defined in \eqref{phe}, \eqref{rlhe}, \eqref{lhe}. Then for any $x_0\in\R$
\[
 \beta(x_0)\ge 
 \beta^{*}(x_0)\ge 
 \beta^{**}(x_0).
\]
\end{lemma}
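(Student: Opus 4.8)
The plan is to establish the two inequalities separately, in each case showing that membership in the function space appearing on the smaller side forces membership in the space on the larger side at a comparable exponent, so that the supremum defining the larger exponent is at least as big.

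For the inequality $\beta^*(x_0)\ge\beta^{**}(x_0)$, I would argue that the restricted local exponent is defined by essentially the same condition as the local exponent but with the $\Lambda^s$ requirement imposed only at the single point $x_0$ rather than on a whole neighborhood. Concretely, if $f\in C^{k,s}(I_n)$ for some interval $I_n$ containing $x_0$, then $f^{(k)}$ is continuous on $I_n$ and satisfies $\big|f^{(k)}(x)-f^{(k)}(y)\big|=O\big(|x-y|^s\big)$ for all $x,y\in I_n$; specializing $y=x_0$ gives exactly the pointwise condition $f^{(k)}\in\Lambda^s(x_0)$, hence $f\in C^{k,s}(x_0)$. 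Therefore every admissible pair $(k,s)$ in the supremum defining $\beta^{**}$ is also admissible in the supremum defining $\beta^*$, and taking suprema (then the limit over $n$) yields $\beta^*(x_0)\ge\beta^{**}(x_0)$. A small point to handle is the boundary value $s=0$, which is allowed in \eqref{lhe} but excluded in \eqref{rlhe}; this is harmless because $C^{k,0}(I_n)$ contributes only the value $k$ to the supremum, which is already captured in the limit by any $C^{k-1,s}$ with $s$ close to $1$, or can be absorbed by an infimum argument on the nested intervals.

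For the inequality $\beta(x_0)\ge\beta^*(x_0)$, the key observation is that the $C^{k,s}(x_0)$ condition produces a natural candidate polynomial $P$ for the definition \eqref{cs}. If $f\in C^{k,s}(x_0)$, then $f$ has $k$ continuous derivatives near $x_0$ and $f^{(k)}\in\Lambda^s(x_0)$. I would take $P$ to be the degree-$k$ Taylor polynomial of $f$ at $x_0$ and estimate the remainder: writing $f(x)-P(x-x_0)$ via the integral (or Lagrange) form of Taylor's theorem, the remainder involves $f^{(k)}$ evaluated at intermediate points, and the hypothesis $\big|f^{(k)}(t)-f^{(k)}(x_0)\big|=O\big(|t-x_0|^s\big)$ lets me bound it by $O\big(|x-x_0|^{k+s}\big)$. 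Since $\deg P=k\le k+s$, the polynomial $P$ has degree at most $k+s$ as required in \eqref{cs}, so $f\in C^{k+s}(x_0)$. Consequently every value $k+s$ in the supremum for $\beta^*$ is an admissible value of $s'$ for which $f\in C^{s'}(x_0)$, giving $\beta(x_0)\ge\beta^*(x_0)$.

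The routine parts are the two supremum comparisons, which follow formally once the inclusions of function spaces are in place. The main technical step — and the only place where anything must actually be computed — is the Taylor remainder estimate in the second inequality, where I must convert the pointwise Hölder condition on $f^{(k)}$ into the sharp $O\big(|x-x_0|^{k+s}\big)$ bound on $f(x)-P(x-x_0)$; I expect this to be a clean application of the integral form of the remainder, but one should be slightly careful that the $\Lambda^s(x_0)$ condition is used at intermediate points $t$ lying between $x_0$ and $x$, for which $|t-x_0|\le|x-x_0|$ keeps the estimate valid.
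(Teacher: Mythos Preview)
Your argument is correct. The paper itself does not give a proof of this lemma: it simply states ``Comparing the definitions, the following result is straightforward'' and moves on. Your write-up supplies exactly the details one would fill in, namely that $C^{k,s}(I_n)\subset C^{k,s}(x_0)$ by specializing $y=x_0$, and that $C^{k,s}(x_0)\subset C^{k+s}(x_0)$ via the Taylor remainder with the degree-$k$ Taylor polynomial as the approximating $P$. Both inclusions are handled correctly, including the minor bookkeeping about $s=0$ in the definition of $\beta^{**}$, so there is nothing to add.
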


The previous example shows that we cannot expect equalities in general. In fact the pointwise H\"older exponent and the local H\"older exponent can be~$\infty$ and $0$, respectively, at the same point, for instance this is the case at $x_0=0$ for the function $f(x)=e^{-{x^{-2}}}\sin\big( e^{{x^{-4}}}\big)$, with $f(0)=0$.

\

As mentioned in \cite{SeVe}, the local H\"older exponent is arguably the most natural from the analytic point of view because of its stability under the action of pseudo-differential operators (having in mind especially fractional derivatives).

\

In this paper, we consider functions that are fractional integrals of Fourier series. These are classical objects in harmonic analysis coming at least from \cite{HaLi2}. In Section~\ref{section:genres}, we provide conditions assuring that the pointwise H\"older exponent, the restricted local H\"older exponent and the local H\"older exponent of these functions coincide (see \cite{SeVe} for general relations between the first and the third).
The conditions are met by certain series arising from modular forms.
These and other related series has been treated by several authors in connection with fractal and multifractal analysis
\cite{chamizo}, \cite{HoTc}, \cite{jaffard}, \cite{MiSc}, \cite{otal}, \cite{petrykiewicz1}, \cite{ruizcabello}.
We deal with them in Section~\ref{section:modular}
and we deduce that the spectra of singularities of fractional integrals of cusp forms have a discrete image, in this sense they are pure fractals. It is interesting to note that it follows from \cite{petrykiewicz1} and \cite{ruizcabello} that, in contrast, in some ranges the fractional integrals of modular forms which are not cusp forms are multifractals.
The pointwise H\"older exponent of series arising from modular forms has been studied by the second author in \cite{petrykiewicz1}, however only the case of irrational points was considered. At the end of Section~\ref{section:modular}, we provide the complementary results by computing the pointwise H\"older exponent at rational points.
Finally, in Section~4 we illustrate the results with some examples and computer plots. 

\section{General results}\label{section:genres}

In this section we state some results related to fractional integrals of Fourier series. The general situation is as follows: we consider a sequence of complex numbers indexed by integers 
$\{a_n\}_{n\in\Z}$ with at most polynomial growth and we introduce the fractional integral of the Fourier series with Fourier coefficients given by this sequence. Namely, we use the notation
\begin{equation}\label{frac_int}
f_\alpha(x)=\sum_{n\ne 0}\frac{a_n}{|n|^\alpha}e(nx),
\qquad\text{where \quad $e(x)=e^{2\pi i x}$,}
\end{equation}
and we assume the absolute convergence, in this way $f_\alpha$ is a continuous function. Note that the polynomial growth of $a_n$ implies that this is always the case for large enough $\alpha$. 

In principle the Fourier series $\sum a_n e(nx)$ is not well-defined but we can always consider its Poisson integral $\sum a_n e(nx)r^{|n|}$ with $0\le r<1$ which, under mild convergence conditions on the Fourier series, admits an integral representation in terms of the Poisson kernel \cite{rudin}
\begin{equation}\label{po_id}
\sum_{n=-\infty}^\infty a_n e(nx)r^{|n|}
=
\int_{-1/2}^{1/2}
P_r(x-t)
\sum_{n=-\infty}^\infty a_n e(nt)\;dt
\end{equation}
with
\begin{equation}\label{fo_po_ke}
P_r(t)=
\Re
\Big(
\frac{1+re(t)}{1-re(t)}
\Big)
=
\frac{1-r^2}{1-2r\cos(2\pi t)+r^2}.
\end{equation}
Equivalently, renaming $r=e^{-2\pi y}$, we have the extension of the Fourier series to the upper half plane
\begin{equation}\label{poi_int}
f(z)
=
\sum_{n=-\infty}^\infty a_n e(nx)e^{-2\pi |n|y}
=
\sum_{n=0}^\infty a_n e(nz)
+
\sum_{n=-\infty}^{-1} a_n e(n\overline{z})
\end{equation}
where $z=x+iy\in\mathbb{H}$.

With our assumption on the growth of $a_n$, the convergence of these series is assured.

\

The following result shows the coincidence of the different definitions of the H\"older exponent under certain conditions that appear naturally in examples coming from automorphic cusp forms:

\begin{theorem}\label{th:general}
Assume that, for a fixed sequence $\{a_n\}_{n\in\Z}$ and $\gamma> 0$,
\begin{equation}\label{ebound}
\sum_{0\le n\le N} a_ne(nx)=O_{\varepsilon}\big(N^{\gamma+\varepsilon}\big)
\quad\text{and}\quad
\sum_{-N\le n\le 0} a_ne(nx)=O_{\varepsilon}\big(N^{\gamma+\varepsilon}\big)
\end{equation}
hold for any $\varepsilon>0$, uniformly in $x\in\R$. If for a given $x_0\in\R$ we have $f(x_0+iy)\ne o\big(y^{-\gamma}\big)$ with $f$ as in \eqref{poi_int}, then 
\[
\beta(x_0)=\beta^{*}(x_0)=\beta^{**}(x_0)=\alpha-\gamma
\]
where $\beta(x_0)$, $\beta^{*}(x_0)$, $\beta^{**}(x_0)$ are the H\"older exponents of the function $f_\alpha$ defined by \eqref{frac_int} with $\alpha>\gamma$. 
\end{theorem}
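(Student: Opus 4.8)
The plan is to establish the two extreme inequalities $\beta^{**}(x_0)\ge \alpha-\gamma$ and $\beta(x_0)\le \alpha-\gamma$, and then invoke Lemma~\ref{compare}: its chain $\beta\ge\beta^{*}\ge\beta^{**}$ squeezes all three exponents to the common value $\alpha-\gamma$. The object connecting both bounds is the harmonic (Poisson) extension $F$ of $f_\alpha$,
\[
F(x+iy)=\sum_{n\ne0}\frac{a_n}{|n|^\alpha}\,e(nx)\,e^{-2\pi|n|y},
\]
whose $y$-derivatives encode the smoothness of $f_\alpha$ and are tied to the series $f$ of \eqref{poi_int} by a fractional-integration identity. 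Fixing an integer $m>\alpha$, the Gamma identity $(2\pi|n|)^{\alpha-m}=\Gamma(m-\alpha)^{-1}\int_0^\infty t^{m-\alpha-1}e^{-2\pi|n|t}\,dt$ combined with $\partial_y^m e^{-2\pi|n|y}=(-2\pi|n|)^m e^{-2\pi|n|y}$ yields, after interchanging sum and integral,
\[
f(x+iy)-a_0=C_{m,\alpha}\int_0^\infty t^{m-\alpha-1}\,\partial_y^m F\big(x+i(y+t)\big)\,dt,
\]
the integral converging since $\partial_y^m F$ decays exponentially as its height grows.

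For the lower bound I would exploit the uniform hypothesis \eqref{ebound}. Writing $S(N,x)=\sum_{0\le n\le N}a_ne(nx)=O_\varepsilon(N^{\gamma+\varepsilon})$ and summing by parts against the weights $|n|^{m-\alpha}e^{-2\pi|n|y}$ gives $\partial_y^m F(x+iy)=O_\varepsilon\big(y^{(\alpha-\gamma)-m-\varepsilon}\big)$ uniformly in $x$ (the two terms produced by Abel summation, of sizes $y^{-(\gamma+m-\alpha+\varepsilon)}$ and $y\cdot y^{-(\gamma+m-\alpha+1+\varepsilon)}$, are of the same order). By the classical characterization of global H\"older classes through the decay of the $y$-derivatives of the Poisson extension (of Hardy--Littlewood/Taibleson type), this forces $f_\alpha\in C^{k,\sigma}(\R)$ for every $k+\sigma<\alpha-\gamma$, hence $f_\alpha\in C^{k,\sigma}(I_n)$ on each nested interval and $\beta^{**}(x_0)\ge\alpha-\gamma$.

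For the upper bound I would argue by contradiction: suppose $f_\alpha\in C^{s}(x_0)$ with $s>\alpha-\gamma$, and choose $m>s$. Subtracting the harmonic extension of the approximating polynomial $P$, which is annihilated by $\partial_y^m$ because $\deg P\le s<m$, the Poisson-kernel estimates give $\partial_y^m F(x_0+iy)=O(y^{s-m})$. Inserting this into the fractional-integration identity and splitting the $t$-integral at $t=y$ (using $m>\alpha$ for convergence at $0$ and $s<\alpha$ for convergence at $\infty$) produces $f(x_0+iy)-a_0=O(y^{s-\alpha})$. Since $s>\alpha-\gamma$ gives $s-\alpha>-\gamma$ and $a_0=o(y^{-\gamma})$ as $\gamma>0$, this would yield $f(x_0+iy)=o(y^{-\gamma})$, contradicting the hypothesis; hence $\beta(x_0)\le\alpha-\gamma$.

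The main obstacle is the two-sided dictionary between H\"older regularity and the growth of $\partial_y^m F$: I must establish, or cite precisely, that the uniform decay $\partial_y^m F=O(y^{\rho-m})$ implies $f_\alpha\in C^{\lfloor\rho\rfloor,\{\rho\}}(\R)$ and, at the point $x_0$, that $f_\alpha\in C^{s}(x_0)$ implies $\partial_y^m F(x_0+iy)=O(y^{s-m})$, handling carefully the polynomial subtraction and the borderline integer values of $\alpha-\gamma$ (where the constraint $0<s\le1$ in the definition of $\beta^{*}$ must be respected). By comparison the Abel summation, the Gamma-function identity, and the interchanges of limits are routine once the convergence coming from \eqref{poi_int} and the exponential decay of $F$ at large height have been recorded.
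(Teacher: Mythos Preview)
Your proposal is correct and, for the upper bound $\beta(x_0)\le\alpha-\gamma$, essentially identical to the paper's argument: the paper's condition \eqref{condition} is precisely your estimate $\partial_y^{m}F(x_0+iy)=O(y^{s-m})$ (up to the constant $(-2\pi)^{m}$), obtained from the same Poisson-kernel bounds, and the paper's final integration is exactly your Gamma-function identity written in the variable $t/y$.

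For the lower bound the routes diverge slightly. The paper avoids the Poisson extension altogether and gives a self-contained elementary estimate: it bounds $|f_\alpha^{(m)}(x)-f_\alpha^{(m)}(y)|$ directly by splitting the Fourier series at $N\sim|x-y|^{-1}$, applying the mean value theorem to the low frequencies and partial summation with \eqref{ebound} to the tails, which immediately yields $f_\alpha\in C^{m,\alpha-\gamma-m-\varepsilon}(I)$ on every bounded interval. Your route through the uniform bound $\partial_y^m F=O(y^{(\alpha-\gamma)-m-\varepsilon})$ and the Hardy--Littlewood/Taibleson characterization is equally valid and perhaps more conceptual, but it trades self-containment for an external citation; the paper's argument needs no such black box. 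In either approach the polynomial/periodicity issue you flag as the ``main obstacle'' is handled in the paper by working with the periodic Poisson kernel on $[-1/2,1/2]$ and observing that integration by parts against $Q(-t)$ leaves only $O(1)$ boundary terms, which are harmless against the blowing-up main term.
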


\begin{proof}
Take $m=\alpha-\gamma-1$ if $\alpha-\gamma$ is an integer and $m=\lfloor \alpha-\gamma\rfloor$ otherwise. By the triangle inequality,
\begin{eqnarray*}
 \frac{\big|f_\alpha^{(m)}(x)-f_\alpha^{(m)}(y)\big|}{(2\pi)^m}
 &\le&
 \sum_{\delta\in\{-1,1\}}\Big|
 \sum_{0<\delta n\le N}
 \frac{a_n}{|n|^{\alpha-m}}
 \big(e(nx)-e(ny)\big)
 \Big|
 \\
 &&
 +
 \Big|
 \sum_{|n|>N}
 \frac{a_n}{|n|^{\alpha-m}}
 e(nx)
 \Big|
 +
 \Big|
 \sum_{|n|>N}
 \frac{a_n}{|n|^{\alpha-m}}
 e(ny)
 \Big|.
\end{eqnarray*}
The two latter sums are $O\big(N^{\gamma-\alpha+m+\varepsilon}\big)$ by \eqref{ebound} after partial summation. By the mean value theorem applied to the real and imaginary parts of the first inner sum, and using again \eqref{ebound}, we have for $\delta = \pm 1$
\begin{eqnarray*}
 \Big|
 \sum_{0<\delta n\le N}
 \frac{a_n}{|n|^{\alpha-m}}
 \big(e(nx)-e(ny)\big)
 \Big|
 &\le& 4\pi|x-y|
 \Big|
 \sum_{0<|n|\le N}
 \frac{a_n}{|n|^{\alpha-m-1}}
 e(n\theta)
 \Big|
 \\
 &=&O\big(|x-y|N^{m-(\alpha-\gamma)+1+\varepsilon}\big).
\end{eqnarray*}
Choosing $N$ like $|x-y|^{-1}$ we deduce that $f_\alpha\in C^{m,\alpha-\gamma-m-\varepsilon}(I)$ for any bounded interval $I\subset\R$ and, by Lemma~\ref{compare}, $\beta(x_0)\ge  \beta^{*}(x_0)\ge \beta^{**}(x_0)\ge \alpha-\gamma$.

It remains to prove that $\beta(x_0)>\alpha-\gamma$ leads to a contradiction. In that case there exists $\varepsilon>0$, that we assume less than $\gamma$, such that 
\begin{equation}\label{new_bound_rev}
 \left| f_\alpha(x) - Q(x-x_0) \right|  
 = O\left(|x-x_0|^{\alpha-\gamma+\varepsilon}\right)
\end{equation}
for a certain polynomial $Q$ of degree at most $\alpha-\gamma+\varepsilon$.
Let us fix an even integer $m'$ greater than $\alpha+\max\{0,-\gamma+\varepsilon\}$.
We are going to prove (cf. \cite[Lemma~2.11]{hardy}, \cite[Lemma~3.6]{chamizo})
\begin{equation}\label{condition}
\sum_{n\neq 0} \frac{a_n}{|n|^{\alpha-m'}} e(nx_0)e^{-2\pi|n|y}  = O(y^{\alpha-\gamma+\varepsilon-m'})
\qquad \text{as }\quad y \to 0^+.
\end{equation}

By \eqref{po_id}, the left hand side is, up to a constant, 
$\int_{-1/2}^{1/2}P_r^{(m')}(t)f_\alpha(x_0-t)\; dt$ where $r=e^{-2\pi y}$, $r\to 1^-$.
By \eqref{new_bound_rev}, $f_\alpha(x_0-t)= 
Q(-t)+O\left(|t|^{\alpha-\gamma+\varepsilon}\right)$
and the contribution of this latter $O$-term to the integral is admissible using the bounds \cite[(3.9)]{chamizo}
\[
 P_r^{(k)}(t)=
 \begin{cases}
  O\big((1-r)|t|^{-k-2}\big)&\text{for }1-r<|t|\le 1/2
\\  O\big((1-r)^{-k-1}\big)&\text{for }|t|\le 1-r
 \end{cases},
 \qquad k\in\Z_{\ge 0}.
\]
On the other hand, as $Q(-t)$ is a polynomial of degree less than $m'$, by repeated integration by parts, its contribution to the integral is   given by the sum $\sum_{k=0}^{m'-1} Q^{(k)}(-t)P_r^{(m'-1-k)}(t)\big|_{-1/2}^{1/2}=O(1)$ and the proof of \eqref{condition} is complete.

By direct integration  term by term and using \eqref{condition}, we obtain
\begin{align*}
(f(x_0+iy)-a_0)y^{\alpha-m'}  & = C\int_0^{\infty}t^{m'-\alpha-1}
\sum_{n\neq 0} \frac{a_n}{|n|^{\alpha-m'}}e(nx_0)e^{-2\pi |n|y(1+t)}\,dt 
\\
  & = \int_0^{\infty} t^{m'-\alpha-1}O\left([(1+t)y]^{\alpha-\gamma+\varepsilon-m'}\right)\,dt,
\end{align*}
The last integral is convergent since $m'-\alpha$ and $\gamma-\varepsilon$ are positive. Multiplying by $y^{m'-\alpha}$ we get $f(x_0+iy) = O(y^{-\gamma+\varepsilon})$
that contradicts the assumption in the statement of the theorem.
\end{proof}

\

If $a_n=0$ for $n<0$, the function $f$ defined in \eqref{poi_int} becomes holomorphic in the upper half plane. In this case we define
\[
f_\alpha^c(x)=\sum_{n=1}^\infty \frac{a_n}{n^\alpha}\cos(2\pi nx)
\qquad\text{and}\qquad
f_\alpha^s(x)=\sum_{n=1}^\infty \frac{a_n}{n^\alpha}\sin (2\pi nx).
\]
The regularity of $f_\alpha^c$ and $f_\alpha^s$ is linked to the functions (again holomorphic in the upper half plane)
\[
f^{\Re}(z)
=
\sum_{n=0}^\infty \Re(a_n) e(nz)
\qquad\text{and}\qquad
f^{\Im}(z)
=
\sum_{n=0}^\infty \Im(a_n) e(nz).
\]

The following result is a version of Proposition~1 of \cite{jaffard2} employing the analytic wavelet from \cite{petrykiewicz1}:

\begin{proposition}\label{pr:jaffard}
Assume $a_n=0$ for $n<0$, as before, and let $\beta>0$ with $\lfloor\beta\rfloor\leq \alpha-1$. If any of the functions $f_\alpha$, $f_\alpha^c$ and $f_\alpha^s$ belongs to $C^\beta(x_0)$, then
\begin{equation}\label{wbound1}
f(x+iy)=
O\big( y^{\beta-\alpha}\big(1+y^{-1}|x-x_0|\big)^\beta\big)
\qquad\text{when }(x,y)\to (x_0,0^+).
\end{equation}
On the other hand, if for some $0<\beta'<\beta$ we have
\begin{equation}\label{wbound2} 
f(x+iy)=
O\big( y^{\beta-\alpha}\big(1+y^{-1}|x-x_0|\big)^{\beta'}\big)
\qquad\text{when }(x,y)\to (x_0,0^+),
\end{equation}
then $f_\alpha\in C^\beta(x_0)$. Moreover, if $f^{\Re}$ and $f^{\Im}$ both satisfy \eqref{wbound2}, then also $f_\alpha^s,f_\alpha^c\in C^\beta(x_0)$.
\end{proposition}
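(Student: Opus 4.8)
The plan is to realize the holomorphic extension $f(x+iy)$ as a constant multiple of the continuous wavelet transform of $f_\alpha$ with respect to the analytic wavelet of \cite{petrykiewicz1}, and then to invoke the two-microlocal wavelet characterization of pointwise Hölder regularity from \cite[Prop.~1]{jaffard2}. Concretely, let $\psi$ be the wavelet whose Fourier transform is supported on $(0,\infty)$ with $\widehat\psi(\xi)=c\,\xi^\alpha e^{-2\pi\xi}$ there, normalised so that the transform $W_g(y,x)=y^{-1}\int \psi\big((t-x)/y\big)g(t)\,dt$ satisfies
\[
 W_{f_\alpha}(y,x)=y^\alpha\big(f(x+iy)-a_0\big).
\]
First I would verify this identity by a Fourier coefficient computation: convolving $f_\alpha$ against the periodic kernel with coefficients $n^\alpha e^{-2\pi ny}$ reproduces $f(x+iy)-a_0$, and rescaling that kernel as $y^{-\alpha-1}\psi(\cdot/y)$ exhibits it as the rescaled wavelet. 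Since $\widehat\psi$ vanishes to order $\alpha$ at the origin, $\psi$ has enough vanishing moments to detect Hölder regularity up to order $\beta$ — this is exactly what the hypothesis $\lfloor\beta\rfloor\le\alpha-1$ guarantees — so that the criterion of \cite[Prop.~1]{jaffard2} applies in this range.

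The decisive structural point is that $\psi$ is analytic: its transform annihilates negative frequencies, so $W_g$ depends only on the positive-frequency part of $g$. As $f_\alpha^c$ and $f_\alpha^s$ share with $f_\alpha$ the same positive-frequency part up to the scalars $\tfrac12$ and $\tfrac1{2i}$, we get $W_{f_\alpha^c}=\tfrac12 W_{f_\alpha}$ and $W_{f_\alpha^s}=\tfrac1{2i}W_{f_\alpha}$, so all three are nonzero constant multiples of $y^\alpha(f(x+iy)-a_0)$. Hence the \emph{necessary} direction is uniform across the three functions: if any of them lies in $C^\beta(x_0)$, the Jaffard bound $|W_g(y,x)|=O\big(y^\beta(1+|x-x_0|/y)^\beta\big)$ translates, after dividing by $y^\alpha$ and absorbing the constant $a_0$ (legitimate since $\lfloor\beta\rfloor\le\alpha-1$ forces $\beta<\alpha$, hence $y^{\beta-\alpha}\to\infty$), into exactly \eqref{wbound1}.

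For the \emph{sufficient} direction applied to $f_\alpha$ itself, the hypothesis \eqref{wbound2} gives $|W_{f_\alpha}(y,x)|=O\big(y^\beta(1+|x-x_0|/y)^{\beta'}\big)$ with $\beta'<\beta$, and the converse half of \cite[Prop.~1]{jaffard2} yields $f_\alpha\in C^\beta(x_0)$; the strict inequality $\beta'<\beta$ is what removes the logarithmic ambiguity inherent in wavelet reconstruction. To reach $f_\alpha^c,f_\alpha^s$ I would instead split $a_n=\Re(a_n)+i\Im(a_n)$ and introduce the fractional integrals $g^{\Re}_\alpha,g^{\Im}_\alpha$ of the real-coefficient series attached to $f^{\Re},f^{\Im}$. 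A direct check gives $f_\alpha^c=\Re(g^{\Re}_\alpha)+i\,\Re(g^{\Im}_\alpha)$ and $f_\alpha^s=\Im(g^{\Re}_\alpha)+i\,\Im(g^{\Im}_\alpha)$; since the identity and the converse above apply verbatim to $g^{\Re}_\alpha,g^{\Im}_\alpha$ through $f^{\Re},f^{\Im}$, and since taking real or imaginary parts preserves membership in $C^\beta(x_0)$, the assumption that both $f^{\Re}$ and $f^{\Im}$ satisfy \eqref{wbound2} delivers $f_\alpha^c,f_\alpha^s\in C^\beta(x_0)$.

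I expect the main obstacle to be the first step, namely pinning down the wavelet and transferring Jaffard's criterion to the present periodic, slowly-decaying setting: the wavelet $\psi$ only has polynomial decay of order $\alpha+1$, coming from the $\xi^\alpha$ singularity of $\widehat\psi$ at the origin, so one must either appeal to the decay-robust version of the criterion established in \cite{petrykiewicz1} or control the periodization tails directly, while keeping the vanishing-moment count aligned with $\lfloor\beta\rfloor\le\alpha-1$. The remaining arithmetic — the Fourier computation, the absorption of $a_0$ in both directions, and the real/imaginary decompositions — is routine.
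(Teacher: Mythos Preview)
Your proposal is correct and follows essentially the same route as the paper: the wavelet you specify via $\widehat\psi(\xi)=c\,\xi^\alpha e^{-2\pi\xi}$ is precisely the paper's $\psi_\alpha(x)=(x+i)^{-\alpha-1}$, you derive the same identity $W_g(y,x)=C_\alpha y^\alpha\big(f(x+iy)-a_0\big)$ and invoke the same Proposition~1 of \cite{jaffard2}, and your treatment of $f_\alpha^c,f_\alpha^s$ through $g_\alpha^{\Re},g_\alpha^{\Im}$ is a notational repackaging of the paper's passage through $\Re(f_\alpha^c),\Im(f_\alpha^c)$ (indeed $\Re(g_\alpha^{\Re})=\Re(f_\alpha^c)$, etc.). The only cosmetic difference is that for the necessary direction you exploit $W_{f_\alpha^c}=\tfrac12 W_{f_\alpha}$ directly rather than first splitting into real and imaginary parts; this is a harmless shortcut, and your flagged concern about polynomial wavelet decay is exactly what the paper defers to \cite{petrykiewicz1}.
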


\begin{proof}
As in \cite{petrykiewicz1}, we consider $\psi_\alpha(x)=(x+i)^{-\alpha-1}$ that verifies the properties required in \cite{jaffard2} to be an analytic wavelet. Namely
\begin{align*}
 &
 |\psi_\alpha(x)|+
 |\psi'_\alpha(x)|+
 \dots+ |\psi^{(\lfloor \beta\rfloor+1)}_\alpha(x)|
 = O\big(|x|^{-\lfloor \beta\rfloor-2}\big)
 \quad\text{as }x\to\infty,
 \\
 &
 \int_{-\infty}^\infty \psi_\alpha(x)\; dx=
 \int_{-\infty}^\infty x\psi_\alpha(x)\; dx=
 \dots =  \int_{-\infty}^\infty x^{\lfloor \beta\rfloor}\psi_\alpha(x)\; dx=0,
 \\
 &
 \widehat{\psi}_\alpha(\xi)=0\text{ for $\xi\le 0$ }
 \quad\text{and}\quad
 \int_0^\infty \xi^{-1}\big|\widehat{\psi}_\alpha(\xi)\big|^2\; d\xi<\infty.
\end{align*}
These properties are checked in \cite{petrykiewicz1}.
The last integral is assumed to be~$1$ in the original definition in \cite{jaffard2}, but it is harmless multiplying $\psi_\alpha$ by a scaling constant. 

The corresponding wavelet transform is 
\begin{equation}\label{wa_tr}
 Wg(a,b)
 =a^\alpha\int_{-\infty}^\infty
 g(t)(t-b-ia)^{-\alpha-1}\; dt\quad \text{ with $a>0$ and $b\in\R$.}
\end{equation}
It is not other that the classical Riemann-Liouville fractional integral \cite{HaLi1}.

By the residue theorem, the wavelet transform of $g(t)=e(\lambda t)$ is $0$ if $\lambda\le 0$ and $C_\alpha \lambda^\alpha a^\alpha e\big(\lambda(b+ia)\big)$ otherwise, with $C_\alpha$ a constant depending on $\alpha$. By the Euler formula, the wavelet transforms of $\cos(2\pi\lambda t)$ and $\sin(2\pi\lambda t)$ are also of the form 
$C_\alpha \lambda^\alpha a^\alpha e\big(\lambda(b+ia)\big)$ for any $\lambda>0$. Hence, for $g=f_\alpha,f_\alpha^c,f_\alpha^s$, \eqref{wa_tr} reads
\[
 Wg(y,x)
 = C_\alpha y^\alpha \big(f(z)-a_0\big)
\quad \text{ where $z=x+iy$.}
\]

The function $f_\alpha$ satisfies $\widehat{f}_\alpha(\xi)=0$ for $\xi<0$, in the distributional sense. Then Proposition~1 of \cite{jaffard2} gives the results concerning $f_\alpha$, namely, if $f_\alpha\in C^\beta(x_0)$, then $f$ satisfies \eqref{wbound1} and conversely if $f$ satisfies \eqref{wbound2}, then $f_\alpha\in C^\beta(x_0)$ (see also the Remark in \cite[p.161]{HoTc}).
This proposition also applies when one considers real valued functions and we can proceed in a similar way considering 
$\Re(f_\alpha^c)$ and $\Re(f_\alpha^s)$, with wavelet transform 
$C_\alpha y^\alpha \big(f^{\Re}(z)-\Re(a_0)\big)$, or 
$\Im(f_\alpha^c)$ and $\Im(f_\alpha^s)$, with wavelet transform 
$C_\alpha y^\alpha \big(f^{\Im}(z)-\Im(a_0)\big)$. 

If $f_\alpha^c\in C^\beta(x_0)$, then $\Re \big(f_\alpha^c\big), \Im \big(f_\alpha^c\big)\in  C^\beta(x_0)$, hence by \cite{jaffard2}, $f^{\Re}$ and $f^{\Im}$ satisfy the bound \eqref{wbound1} and consequently $f=f^{\Re}+f^{\Im}$ too. On the other hand, if $f^{\Re}$ and $f^{\Im}$ satisfy \eqref{wbound2}, then \cite{jaffard2} gives $\Re \big(f_\alpha^c\big), \Im \big(f_\alpha^c\big)\in  C^\beta(x_0)$ that implies $f_\alpha^c\in  C^\beta(x_0)$. 
The same arguments hold for $f_\alpha^s$.
\end{proof}

\section{Fractional integrals of modular forms}\label{section:modular}

With a view to generalizing the Riemann's example (see below) we consider functions coming from the Fourier expansion of modular forms.

We assume that $\Gamma$ is a congruence group, i.e., $\Gamma(N)<\Gamma<\text{SL}_2(\mathbb{Z})$ where $\Gamma(N)$ is the principal congruence subgroup. 
Given a real number $r>0$, we say that $f$ is a modular form of weight $r$ with respect to $\Gamma$ if the following equality holds for every $\gamma\in\Gamma$:
\begin{equation}\label{eq:modrel}
 f(\gamma z) = m_{\gamma}\big(j_{\gamma}(z)\big)^rf(z).
\end{equation}
Here, $m_{\gamma}$ is a multiplier system verifying $|m_{\gamma}|=1$ and $j_{\gamma}(z)$ 
is the denominator of the linear fractional transformation corresponding to $\gamma$.
The function $f$ is required to be holomorphic not only in the upper complex half plane, but also at the cusps of $\Gamma$ (see \cite[\S2]{iwaniec}). 

In our statements in this section, for the sake of simplicity, we also assume that the width of the cusp at $i\infty$ is~1, this implies that the integral translations belong to $\Gamma$.   
Under these conditions any modular form with respect to such these groups admits a Fourier expansion,
\[
 f(z) = \sum_{n\geq 0}a_ne(nz).
\]
Formally other widths are covered changing $z$ by $z/q$.

\begin{theorem}\label{pr:modforms}
If $f(z)=\sum_{n=1}^\infty a_ne(nz)$ is a cusp form of weight $r>0$, for any $\alpha>r/2$ the function
$f_\alpha$
verifies $\beta(x_0)=\beta^{*}(x_0)=\beta^{**}(x_0)=\alpha-r/2$ for $x_0$ irrational. If the coefficients $a_n$ are real then the same holds for  $f_\alpha^c$ and $f_\alpha^s$.
\end{theorem}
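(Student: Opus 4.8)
The plan is to derive the whole statement from Theorem~\ref{th:general} applied with $\gamma=r/2$. For a cusp form we have $a_n=0$ for $n\le 0$, so the function $f$ of \eqref{poi_int} is the given holomorphic $f(z)=\sum_{n\ge 1}a_ne(nz)$, the series $f_\alpha$ is exactly \eqref{frac_int} for this sequence, and the hypothesis $\alpha>r/2$ is precisely $\alpha>\gamma$. Thus once I verify the two hypotheses of Theorem~\ref{th:general} — the uniform partial–sum bound \eqref{ebound} with $\gamma=r/2$, and the lower bound $f(x_0+iy)\ne o\big(y^{-r/2}\big)$ — the theorem gives $\beta(x_0)=\beta^{*}(x_0)=\beta^{**}(x_0)=\alpha-r/2$ at once for $f_\alpha$.

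For \eqref{ebound} the backward sum vanishes identically since $a_n=0$ for $n\le 0$, so only the uniform estimate $\sum_{n\le N}a_ne(nx)=O_\varepsilon\big(N^{r/2+\varepsilon}\big)$ must be established. The elementary Hecke bound $a_n=O(n^{r/2})$, read off from $|f(x+iy)|\ll y^{-r/2}$ and $a_n=e^{2\pi ny}\int_0^1 f(x+iy)e(-nx)\,dx$ by taking $y=1/n$, only yields the trivial $O(N^{r/2+1})$; genuine cancellation in the exponential sum is therefore required. This is the classical Wilton–type estimate, which I would invoke in the form uniform in $x$ valid for congruence groups (see \cite{iwaniec}); it follows from the modular transformation $z\mapsto -1/z$ together with the cuspidal decay. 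I expect this uniform cancellation to be one of the two substantial inputs. As a bonus, the same bound guarantees, by partial summation, uniform convergence of $f_\alpha$ throughout $\alpha>r/2$, so $f_\alpha$ is a well–defined continuous function even where absolute convergence fails.

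For the lower bound I would use that $\phi(z)=y^{r/2}|f(z)|$ is $\Gamma$–invariant — since $|f(\gamma z)|=|j_\gamma(z)|^{r}|f(z)|$ and $\operatorname{Im}(\gamma z)=y\,|j_\gamma(z)|^{-2}$ — and tends to $0$ at every cusp, hence is a continuous, not identically zero function on $\Gamma\backslash\mathbb H$ vanishing only on the finite set of zeros of $f$ modulo $\Gamma$. Because $x_0$ is irrational it is not a cusp, so the vertical geodesic ray $y\mapsto x_0+iy$ does not diverge in $\Gamma\backslash\mathbb H$; its $\omega$–limit set is a nonempty flow–invariant set and therefore contains a whole geodesic, an uncountable set that cannot be contained in the finite zero set of $f$. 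Consequently $\limsup_{y\to 0^+}\phi(x_0+iy)>0$, which is exactly $f(x_0+iy)\ne o\big(y^{-r/2}\big)$; this is the irrational–point statement of \cite{petrykiewicz1}.

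Finally, for real $a_n$ I would treat $f_\alpha^c$ and $f_\alpha^s$ as the symmetric and antisymmetric two–sided series $\tfrac12\sum_{n\ne 0}a_{|n|}|n|^{-\alpha}e(nx)$ and $\tfrac{1}{2i}\sum_{n\ne 0}\operatorname{sgn}(n)\,a_{|n|}|n|^{-\alpha}e(nx)$, so that Theorem~\ref{th:general} applies to them directly: condition \eqref{ebound} holds because each one–sided sum is, up to conjugation, the same Wilton sum, and the half–plane extensions \eqref{poi_int} of these sequences are precisely $\Re f$ and $\Im f$. Thus it remains to upgrade the lower bound to $\Re f(x_0+iy)\ne o\big(y^{-r/2}\big)$ and $\Im f(x_0+iy)\ne o\big(y^{-r/2}\big)$; equivalently, that along a sequence realizing $\limsup\phi$ the argument of $f(x_0+iy)$ is not forced toward $\pm\pi/2$ (resp. $0,\pi$). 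I expect this phase non–degeneracy — which one reads off from the variation of $\arg f$ along the limit geodesic and from the automorphy factor $m_\gamma^{-1}j_\gamma^{-r}$ — to be the main obstacle, since it is the one place where the real and imaginary parts must be separated rather than controlled only through $|f|$. One could instead argue through Proposition~\ref{pr:jaffard}, using $f^{\Re}=f$ and $f^{\Im}=0$ for real coefficients, but its hypothesis $\lfloor\beta\rfloor\le\alpha-1$ becomes binding exactly when $\alpha\notin\Z$ and $r/2\le\{\alpha\}$, so the two–sided reduction above is cleaner.
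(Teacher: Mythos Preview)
For $f_\alpha$ your plan coincides with the paper's: invoke Theorem~\ref{th:general} with $\gamma=r/2$, the Wilton-type bound supplying \eqref{ebound} and the nonvanishing of $\limsup_{y\to0^+} y^{r/2}|f(x_0+iy)|$ supplying the lower hypothesis. The paper cites these two inputs as \cite[Lemmas~3.2 and~3.4]{chamizo}; your dynamical sketch of the second is essentially the standard argument behind that lemma.

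For $f_\alpha^c$ and $f_\alpha^s$ your primary route diverges from the paper's and runs into the obstacle you yourself flag. Feeding the symmetrised two-sided sequences into Theorem~\ref{th:general} requires $\Re f(x_0+iy)\ne o(y^{-r/2})$ and $\Im f(x_0+iy)\ne o(y^{-r/2})$ \emph{separately}, and your phase-variation heuristic does not settle this: nothing you wrote excludes that along the subsequence realising $\limsup\phi$ the argument of $f$ accumulates at a single value. The paper sidesteps the phase issue entirely by decoupling the two inequalities. For $\beta^{**}\ge\alpha-r/2$ it simply observes that with real $a_n$ one has $f_\alpha^c=\Re f_\alpha$ and $f_\alpha^s=\Im f_\alpha$, so membership in every $C^{k,s}(\mathcal U)$ is inherited from $f_\alpha$, which was already handled. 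For $\beta\le\alpha-r/2$ it uses the \emph{forward} implication of Proposition~\ref{pr:jaffard}: if $f_\alpha^c\in C^{\alpha-r/2+\varepsilon}(x_0)$ then \eqref{wbound1} bounds $f$ itself (not $f^{\Re}$ or $f^{\Im}$), and specialising to $x=x_0$ gives $f(x_0+iy)=O(y^{-r/2+\varepsilon})$, contradicting the modulus lower bound with no phase information needed. The constraint $\lfloor\beta\rfloor\le\alpha-1$ that made you dismiss Proposition~\ref{pr:jaffard} still applies here; the paper handles it by restricting to $\varepsilon\le r/2-1$, which covers $r\ge2$ cleanly, whereas your two-sided reduction remains incomplete in every weight.
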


\begin{proof}
By \cite[Lemma~3.2]{chamizo} we have
\[
 \sum_{n=0}^{N} a_ne(nx)=O_\varepsilon\big(N^{r/2+\varepsilon}\big)
\qquad\text{for all $\varepsilon>0$}.
\]
Since $x_0$ is not a cusp, \cite[Lemma~3.4]{chamizo} implies that
$f(x_0+iy)\ne o(y^{-r/2})$
and the result for $f_\alpha$ is a consequence of Theorem~\ref{th:general} choosing $\gamma=r/2$.

If the coefficients are real, clearly
$f_\alpha \in C^{k,s}(\mathcal{U})$ implies that $f_\alpha^c, f_\alpha^s\in C^{k,s}(\mathcal{U})$. Therefore, it suffices to show that $\beta(x_0) \leq \alpha -r/2$ for both functions. 
Assume that there exists $0<\varepsilon\leq {r}/{2}-1$ such that $f_\alpha^c \in C^{\alpha - r/2 +\varepsilon}$. 
By Proposition~\ref{pr:jaffard}, we have $f(x+iy)=O(y^{-r/2+\varepsilon}(1+y^{-1}|x-x_0|)^{\alpha-r/2+\varepsilon})$ as $(x,y) \to (x_0,0^+)$. 
In particular 
$f(x_0+iy)= O(y^{-r/2+\epsilon})$ that contradicts
$f(x_0+iy)\ne o(y^{-r/2})$. 
The same argument holds for $f_\alpha^s$, which completes the proof of the theorem.
\end{proof}

\

Let $f$ be a continuous complex valued function. The spectrum of singularities of $f$ is defined as the following correspondence:
\[
   d_f(\delta) := \begin{cases} \dim_{\mathcal{H}}\{x\in[0,1]:\beta(x)=\delta\}, & \text{if $\{x\in[0,1]:\beta(x)=\delta\}\neq \emptyset$}. \\ -\infty, & \text{if $\{x\in[0,1]:\beta(x)=\delta\}= \emptyset$},  \end{cases}
\]
where $\dim_{\mathcal{H}}$ is the Hausdorff dimension and $\beta(x)$ is the pointwise H\"older exponent of $f$. If the image of $d_f$ is not discrete, we say that $f$ is a multifractal function, or just a multifractal.

\

\begin{corollary}\label{cr:modforms}
With the notation and assumptions of Theorem~\ref{pr:modforms}, the 
functions $f_\alpha$, $f_\alpha^c$ and $f_\alpha^s$ 
are not multifractal.
\end{corollary}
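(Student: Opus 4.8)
The plan is to leverage Theorem 3.1, which computes the pointwise Hölder exponent $\beta(x_0) = \alpha - r/2$ at every irrational $x_0$. The definition of multifractality hinges on the image of the spectrum of singularities $d_f(\delta)$; a function is non-multifractal precisely when this image is discrete (a finite or discrete set of values). My strategy is to show that $\beta(x)$ takes only a small, discrete set of values as $x$ ranges over $[0,1]$, so that the level sets $\{x : \beta(x) = \delta\}$ are nonempty for only finitely many $\delta$, forcing the image of $d_f$ to be discrete.

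First I would observe that by Theorem 3.1, $\beta(x) = \alpha - r/2$ for \emph{every} irrational $x \in [0,1]$. Since the irrationals have full measure and are dense, the single value $\delta_0 = \alpha - r/2$ is attained on a set of Hausdorff dimension $1$, so $d_f(\delta_0) = 1$. It remains only to control $\beta$ at the countably many rational points. Here I would invoke the computation of the pointwise Hölder exponent at rationals promised in the introduction (``we provide the complementary results by computing the pointwise H\"older exponent at rational points''): at each rational $x$ the exponent $\beta(x)$ takes some value, and because the rationals form a countable set, each level set $\{x : \beta(x) = \delta\}$ with $\delta \neq \delta_0$ is contained in $\Q \cap [0,1]$ and is therefore countable, hence of Hausdorff dimension $0$. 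Thus $d_f(\delta) \in \{-\infty, 0\}$ for every $\delta \neq \delta_0$, and $d_f(\delta_0) = 1$.

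Consequently the image of $d_f$ is contained in $\{-\infty, 0, 1\}$, which is a discrete set, so by definition $f_\alpha$ is not a multifractal. The same argument applies verbatim to $f_\alpha^c$ and $f_\alpha^s$ when the coefficients $a_n$ are real, since Theorem 3.1 gives the identical value $\alpha - r/2$ for their pointwise exponent at irrational points, and the rational points again contribute only a countable set.

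The main obstacle I anticipate is not the dimension bookkeeping but ensuring that the values $\beta(x)$ at rational $x$ do not accumulate in a way that makes the image of $d_f$ non-discrete. Even if infinitely many distinct exponent values occurred at rationals, each would still give $d_f = 0$ (dimension of a countable set), so the image would remain $\subseteq \{-\infty, 0, 1\}$; thus discreteness of the image is automatic regardless of how the rational exponents behave. The one subtlety to verify carefully is that the set of irrationals where $\beta = \alpha - r/2$ genuinely has dimension $1$ and that no \emph{additional} value is attained on an uncountable set, both of which follow directly from Theorem 3.1 covering all irrationals uniformly.
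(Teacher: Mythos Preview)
Your proposal is correct and follows essentially the same argument as the paper: Theorem~\ref{pr:modforms} pins $\beta(x_0)=\alpha-r/2$ on all irrationals, and since $\Q$ has Hausdorff dimension~$0$, the image of $d_f$ is contained in $\{-\infty,0,1\}$. Your detour through the explicit computation of $\beta$ at rationals is unnecessary (as you yourself note in the final paragraph), and indeed the paper's proof dispenses with it entirely.
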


\begin{proof}
For $x_0\in\R\setminus\Q$ we have $\beta(x_0)=\alpha-r/2$ by Theorem~\ref{pr:modforms}. As $\Q$ is a set of zero Hausdorff dimension, the spectrum of singularities can take at most 3 values.
\end{proof}

\

Once we have determined $\beta(x_0)$ at irrational values the natural question is what happens when $x_0\in\Q$.

\begin{theorem}\label{pr:cusp_rat}
If $f(z)=\sum_{n=1}^\infty a_ne(nz)$ is a cusp form of weight $r>0$, $\alpha>r/2$, then $f_\alpha$
verifies $\beta^{**}(x_0)=\alpha-r/2$ for $x_0\in\Q$. 
If, in addition, $\alpha\ge 1+\lfloor2\alpha-r\rfloor$, then $\beta(x_0)=2\alpha-r$ for every $x_0\in\Q$. 
Moreover, if the coefficients~$a_n$ are real, then the same holds for $f_\alpha^c$ and $f_\alpha^s$.
\end{theorem}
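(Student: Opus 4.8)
The plan is to reduce every statement to the behaviour of the harmonic extension $f(x+iy)$ near the cusp $x_0$, exactly as in Theorem~\ref{th:general} and Proposition~\ref{pr:jaffard}. Write $x_0=a/c$ in lowest terms and choose $\sigma=\begin{pmatrix} a & b\\ c & d\end{pmatrix}\in\mathrm{SL}_2(\Z)$ with $\sigma(i\infty)=x_0$. Since $\Gamma$ is a congruence group every rational is a cusp, so the slashed form $f|_\sigma$ has no constant term at $i\infty$, whence $\Psi(w)=(\Im w)^{r/2}|(f|_\sigma)(w)|$ is bounded on $\mathbb{H}$ and satisfies $\Psi(w)=O_M\big((\Im w)^{-M}\big)$ for every $M$ as $\Im w\to\infty$. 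A direct computation with $w=\sigma^{-1}z$ gives $\Im w=y\big(c^2((x-x_0)^2+y^2)\big)^{-1}$ and the identity $|f(x+iy)|=y^{-r/2}\Psi(w)$. First I would record the two consequences I need: the uniform bound $|f(z)|=O(y^{-r/2})$, and the fact that $\Psi(w)$ is superpolynomially small precisely when $|x-x_0|\ll\sqrt{y}$, where $\Im w$ is large.

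For the local exponent, the lower bound $\beta^{**}(x_0)\ge\alpha-r/2$ is already contained in the interval estimate $f_\alpha\in C^{m,\alpha-r/2-\varepsilon}(I)$ proved inside Theorem~\ref{th:general}, which holds for every bounded interval and hence at every point. For the matching upper bound I would use that every neighbourhood of $x_0$ contains irrational points $x_1$, at which $\beta(x_1)=\alpha-r/2$ by Theorem~\ref{pr:modforms}; if $f_\alpha\in C^{k,s}(I_n)$ then $f_\alpha\in C^{k,s}(x_1)\subset C^{k+s}(x_1)$, forcing $k+s\le\alpha-r/2$, so the supremum in \eqref{lhe} is $\le\alpha-r/2$ on each $I_n$. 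This yields $\beta^{**}(x_0)=\alpha-r/2$.

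To prove $\beta(x_0)=2\alpha-r$ under $\alpha\ge 1+\lfloor 2\alpha-r\rfloor$ I would use Proposition~\ref{pr:jaffard}, noting that this hypothesis is exactly what guarantees $\lfloor\beta\rfloor\le\alpha-1$ for $\beta$ in a neighbourhood of $2\alpha-r$. For the inequality $\beta(x_0)\ge 2\alpha-r$, fix $s=2\alpha-r-\eta$ and verify \eqref{wbound2}, i.e. $|f(x+iy)|=O\big(y^{s-\alpha}(1+y^{-1}|x-x_0|)^{\beta'}\big)$ for a suitable $\beta'\in(s-\eta,s)$. Splitting into the regions $|x-x_0|\gtrsim\sqrt{y}$ (where $\Psi=O(1)$ suffices, the binding constraint occurring at $|x-x_0|\asymp\sqrt{y}$) and $|x-x_0|\ll\sqrt{y}$ (where the superpolynomial decay of $\Psi$ beats any power), the bound holds, so $f_\alpha\in C^{s}(x_0)$ for all $s<2\alpha-r$.

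For the reverse inequality I would exploit the chirp structure directly. Picking $w_0$ with $\Im w_0=v_0$ fixed and $(f|_\sigma)(w_0)\ne0$ and translating by the period $h$, the points $z_n=\sigma(w_0+nh)$ satisfy $y_n:=\Im z_n\asymp n^{-2}$, $|x_n-x_0|\asymp\sqrt{y_n}$ and $|f(z_n)|=y_n^{-r/2}\Psi(w_0)\asymp y_n^{-r/2}$. If $f_\alpha\in C^\beta(x_0)$ with $\beta>2\alpha-r$, then \eqref{wbound1} evaluated at $z_n$, where $1+y_n^{-1}|x_n-x_0|\asymp y_n^{-1/2}$, gives $y_n^{-r/2}=O\big(y_n^{\beta/2-\alpha}\big)$, forcing $\beta\le 2\alpha-r$, a contradiction. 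Hence $\beta(x_0)=2\alpha-r$. When the $a_n$ are real one has $f^{\Re}=f$ and $f^{\Im}=0$, so all the bounds above for $f=f^{\Re}$ feed into the corresponding clauses of Proposition~\ref{pr:jaffard} for $f_\alpha^c,f_\alpha^s$, and the $C^{k,s}(\mathcal U)$ inclusion used in Theorem~\ref{pr:modforms} transfers the $\beta^{**}$ statement as well. The main obstacle is the sharp two-regime analysis of $\Psi$ near the cusp: everything hinges on the growth of $f$ being maximal exactly along the parabola $|x-x_0|\asymp\sqrt{y}$, which is what produces the doubled exponent $2(\alpha-r/2)$ and forces the borderline role of the integer condition in Proposition~\ref{pr:jaffard}.
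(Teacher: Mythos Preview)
Your argument is correct and tracks the paper's approach closely: both reduce everything to the growth of $f$ near the cusp via Proposition~\ref{pr:jaffard}, and your treatment of the upper bound for $\beta^{**}$ reproduces by hand the inequality $\beta^{**}(x)\le\liminf_{t\to x}\beta(t)$ that the paper simply quotes from~\cite{SeVe}. The one genuine variation is in establishing $\beta(x_0)\ge 2\alpha-r$: you verify \eqref{wbound2} directly through the two-regime analysis of $\Psi$ (critical parabola $|x-x_0|\asymp\sqrt y$ versus the region where $\Im w\to\infty$), whereas the paper argues by contrapositive, assuming $f_\alpha\notin C^{2\alpha-r-\delta}(x_0)$ and extracting from the failure of \eqref{wbound2} a sequence $(x_n,y_n)$ that contradicts either the exponential decay of $f$ along the vertical at the cusp or the uniform bound $y^{r/2}|f|=O(1)$; both routes rest on the same growth facts and are of comparable length.
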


\begin{proof}
Repeating the manipulations of the first part of proof of Theorem~\ref{th:general}, we have $\beta^{**}(x_0)\geq \alpha-r/2$. On the other hand, by \cite[Proposition~4.1]{SeVe} 
we have that $\beta^{**}(x)\leq\min(\beta(x),\liminf_{t\to x}\beta(t))$ 
(noting that even though Proposition~4.1 is stated for real-valued functions the proof follows only from the definitions, hence it remains valid for complex-valued functions).
We can then choose a sequence of irrational numbers $\{t_n\}_n$ converging to $x_0$. 
By Theorem~\ref{pr:modforms}, $\beta(t_n)= \alpha-r/2$, and it follows that $\beta^{**}(x_0)\leq \alpha-r/2$.
The case of $f_\alpha^c$ and $f_\alpha^s$ does not add anything new.

We now consider the pointwise H\"older exponent. 
Assume $\beta(x_0)>2\alpha-r$, then $f_\alpha\in C^{2\alpha-r+\delta}(x_0)$.
As the integral part is upper continuous, taking $\delta$ small enough, $\lfloor 2\alpha-r+\delta\rfloor\le \alpha-1$. 
By Proposition~\ref{pr:jaffard} with $x=x_0+\eta y^{1/2}$ where $\eta$ is a small positive constant,
\[
 f\big(x_0+\eta y^{1/2}+iy\big)
 =
 O\Big( y^{\alpha-r+\delta}
 \big(1+y^{-1/2}\big)^{2\alpha-r+\delta}\Big)
 =O\big( y^{(\delta-r)/2}\big).
\]
On the other hand, by the Fourier expansion at the cusp $x_0$ (see \cite[Lemma~3.3]{chamizo}), we have that 
$\big|f\big(x_0+\eta y^{1/2}+iy\big)\big|\big|\eta y^{1/2}\big|^r\ne o(1)$ which contradicts the previous bound. 

We conclude that $\beta(x_0)\le 2\alpha-r$ and it remains to prove that the strict inequality $\beta(x_0)<2\alpha-r$ leads to a contradiction.

If $\beta(x_0)=2\alpha-r-2\delta$ for $f_\alpha$,  $f_\alpha^c$ or $f_\alpha^s$ with $0<\delta\le \alpha-r/2$, then $f_\alpha\not\in C^{2\alpha-r-\delta}(x_0)$ and by Proposition~\ref{pr:jaffard}
\[
f(x+iy) \ne
O\big( y^{\alpha-r-\delta}\big(1+y^{-1}|x-x_0|\big)^{2\alpha-r-\delta}\big)
\quad\text{when }(x,y)\to (x_0,0^+).
\]
This means that there exist two sequences 
$x_n\to x_0$ and $y_n\downarrow 0$, such that
\begin{equation}\label{limsub}
 \lim \big|f(x_n+iy_n)\big| 
 y_n^{r+\delta-\alpha}\big(1+y_n^{-1}|x_n-x_0|\big)^{r+\delta-2\alpha}
 =\infty.
\end{equation}
Choosing a subsequence, we can always assume that either $y_n^{-1}|x_n-x_0|$ is bounded or it tends to~$\infty$. In the first case $y_n^{-1}|x_n-x_0|^2\to 0$ and by \cite[Lemma~3.3]{chamizo}, $\big|f(x_n+iy_n)\big|=O\big(e^{-K/y_n}\big)$ for a certain $K>0$, which contradicts \eqref{limsub}. 
In the second case, \eqref{limsub} implies $\big|f(x_n+iy_n)\big| y_n^{r+\delta-\alpha}\to\infty$. In our ranges, this gives $\big|f(x_n+iy_n)\big| y_n^{-r/2}\to\infty$ that contradicts \cite[(5.3)]{iwaniec}.

\end{proof}

The restricted H\"older exponent can be determined under the same conditions. Note that in this case there is an unexpected dependance on the fractional part of $\alpha-r/2$, denoted by $\{\alpha-r/2\}$.

\begin{theorem}\label{pr:cusp_restricted}
With the notation and hypothesis of Theorem~\ref{pr:cusp_rat}, including $\alpha\ge 1+\lfloor2\alpha-r\rfloor$, 
for every $x_0\in\Q$ the  restricted H\"older exponent of $f_\alpha$ is 
\[
 \beta^*(x_0) =
   \lfloor 2\alpha-r\rfloor +\min\big( 1, 2\{\alpha-r/2\}\big).
\]
\end{theorem}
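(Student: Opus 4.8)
The plan is to reduce everything to the lower-order fractional integrals $f_{\alpha-m}$ by means of the elementary identity
\[
 f_\alpha^{(m)}(x)=(2\pi i)^m\sum_{n\ge1}\frac{a_n}{n^{\alpha-m}}e(nx)=(2\pi i)^m f_{\alpha-m}(x),
\]
which makes sense whenever $\alpha-m>r/2$, by the uniform (bounded interval) part of Theorem~\ref{th:general}. Since $C^{k,s}(x_0)$ requires $f_\alpha^{(k)}$ to be continuous on a neighborhood of $x_0$ and to lie in $\Lambda^s(x_0)$, the restricted exponent is controlled by two data: the largest $k$ for which $f_{\alpha-k}$ is still continuous at the cusp $x_0$, and the pointwise Lipschitz regularity of that top derivative. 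The key point is that each differentiation drops the parameter by one unit, and the cusp regularity of every $f_{\alpha-m}$ that appears is already described by Theorems~\ref{pr:modforms} and~\ref{pr:cusp_rat}.

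For the lower bound I would take $k$ to be the largest integer with $\alpha-k>r/2$, namely $k=\lfloor\alpha-r/2\rfloor$ in the generic case. For every $m\le k$ the shifted series $f_{\alpha-m}$ satisfies the hypotheses of Theorem~\ref{pr:modforms}, so $f_{\alpha-m}\in C^{2(\alpha-m)-r}(x_0)$ and, by Theorem~\ref{th:general}, is continuous near $x_0$; hence $f_\alpha$ is $k$ times continuously differentiable around $x_0$. At the top level $f_{\alpha-k}\in C^{2\{\alpha-r/2\}}(x_0)$, and comparing $f_{\alpha-k}$ with its value at $x_0$ upgrades this to $f_\alpha^{(k)}\in\Lambda^{\min(1,2\{\alpha-r/2\})}(x_0)$ (when $2\{\alpha-r/2\}>1$ only the linear Taylor term survives and one lands in $\Lambda^1$). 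This gives $f_\alpha\in C^{k,\min(1,2\{\alpha-r/2\})}(x_0)$, whence the lower bound for $\beta^*(x_0)$.

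For the matching upper bound I would argue in two steps. First, $f_\alpha^{(k+1)}=(2\pi i)^{k+1}f_{\alpha-k-1}$ cannot extend continuously across $x_0$: here $\alpha-k-1<r/2$, and the Fourier expansion at the cusp $x_0$ together with the non-vanishing $f(x_0+iy)\ne o(y^{-r/2})$ (via \cite[Lemma~3.3]{chamizo}, exactly as in the proof of Theorem~\ref{pr:cusp_rat}) forces $f_{\alpha-k-1}$ to be unbounded along a sequence tending to $x_0$. This caps the admissible differentiation order, so $\beta^*(x_0)\le\lfloor\alpha-r/2\rfloor+1$. Second, when $2\{\alpha-r/2\}<1$ I would sharpen the estimate at level $k$: applying Theorem~\ref{pr:cusp_rat} to $f_{\alpha-k}$ yields $\beta(x_0)=2\{\alpha-r/2\}<1$ for that function, so $f_\alpha^{(k)}\notin\Lambda^{s}(x_0)$ for any $s>2\{\alpha-r/2\}$, while every lower order contributes at most $k+1\le\lfloor\alpha-r/2\rfloor$. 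The hypothesis $\alpha\ge1+\lfloor2\alpha-r\rfloor$ enters precisely here: it is exactly what guarantees that each shifted parameter $\alpha-m$ above still verifies $\alpha-m\ge1+\lfloor2(\alpha-m)-r\rfloor$, so that the sharp value of the pointwise exponent of $f_{\alpha-m}$ at $x_0$ is available from Theorem~\ref{pr:cusp_rat}. Combining the two bounds determines $\beta^*(x_0)=\lfloor\alpha-r/2\rfloor+\min\!\big(1,2\{\alpha-r/2\}\big)$.

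The cases of $f_\alpha^c$ and $f_\alpha^s$ then follow by passing to real and imaginary parts, exactly as in Theorems~\ref{pr:modforms} and~\ref{pr:cusp_rat}. I expect the main obstacle to be the sharp discontinuity statement for $f_{\alpha-k-1}$ at the borderline $\alpha-k-1<r/2$, together with the twin boundary case $\alpha-r/2\in\Z$, where the last continuous derivative sits exactly on the line $\beta=2$ and one must verify that it lands in $\Lambda^1$ but in no better class. This is where the cusp lower bound of \cite[Lemma~3.3]{chamizo} does the essential work; the remainder is bookkeeping on the parameter shifts and the integer part.
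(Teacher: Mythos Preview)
Your overall architecture --- differentiate $f_\alpha$ down to $f_{\alpha'}$ with $\alpha'=\alpha-\lfloor\alpha-r/2\rfloor$ and then read off the Lipschitz order of the top derivative from Theorem~\ref{pr:cusp_rat} --- is exactly the route the paper takes, and your lower bound and the ``$2\{\alpha-r/2\}<1$'' half of the upper bound are fine. The final formula you reach, $\lfloor\alpha-r/2\rfloor+\min(1,2\{\alpha-r/2\})$, is the one the paper's proof actually establishes.

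The genuine gap is in your Step~1 of the upper bound. You claim that $f(x_0+iy)\ne o(y^{-r/2})$ at the rational point $x_0$ and that this forces $f_{\alpha-k-1}$ to blow up. This is false: $f$ is a \emph{cusp} form, so at every rational $x_0$ one has $f(x_0+iy)=O(e^{-K/y})$ for some $K>0$ (this is precisely what \cite[Lemma~3.3]{chamizo} gives, and it is how Theorem~\ref{pr:cusp_rat} uses it). The vertical approach to the cusp carries no obstruction whatsoever; the obstruction to $f_{\alpha'}$ being $C^{1,0}$ on a neighborhood of $x_0$ comes from the \emph{irrational} points nearby. The paper handles this by invoking Corollary~2.1.1 of \cite{chamizo} (equivalently: $\beta^{**}(x_1)=\alpha'-r/2<1$ at any irrational $x_1$ near $x_0$, by Theorem~\ref{pr:modforms}, so $f_{\alpha'}\notin C^{1,0}$ on any open interval). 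Your argument as written does not supply this, and the specific mechanism you invoke points in the wrong direction.

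The boundary case $\{\alpha-r/2\}=0$ that you flag is not a loose end but the hardest part. Here one must show that $f_{r/2+1}$ is nowhere $C^{1,0}$, and the paper devotes a separate Lemma~\ref{notc1} to it, proved via a global $L^2$/Fej\'er obstruction (Lemma~\ref{fejer_c1}) propagated to every interval by an approximate functional equation under the group action (Lemma~\ref{local_frac}). Neither Theorem~\ref{pr:modforms} nor Theorem~\ref{pr:cusp_rat} applies at the critical parameter $\alpha'=r/2$, so this case genuinely requires new input; merely noting the difficulty does not close the argument.
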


The proof of this result requires especial considerations
when $\alpha-r/2$ is a positive integer. In this case we need the following lemma that, in some sense, completes \cite{chamizo} (cf. Corollary~2.1.1) including an extremal case. We have not found a simple proof. The one given at the end of this section involves two other auxiliary results.

\begin{lemma}\label{notc1}
If $f$ is a cusp form of weight $r\ge4$, then $f_{r/2+1}\not\in C^{1,0}(I)$ for any open interval $I\subset\R$.
\end{lemma}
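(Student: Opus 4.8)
The plan is to argue by contradiction, converting the hypothetical $C^1$-regularity of $f_{r/2+1}$ on $I$ into a statement about the boundary behaviour of the \emph{critical} fractional integral $G(z):=f_{r/2}(z)=\sum_{n\ge 1}a_n n^{-r/2}e(nz)$, which is holomorphic on $\mathbb{H}$, and then to contradict the non-vanishing $f(x_0+iy)\ne o\big(y^{-r/2}\big)$ given by \cite[Lemma~3.4]{chamizo}. First I would assume $f_{r/2+1}\in C^{1,0}(I)$ and note that $\partial_x f_{r/2+1}(x+iy)=2\pi i\,G(x+iy)$, so the task reduces to showing that $G$ extends continuously up to the boundary segment $I$. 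To see this I would localize: for $K\Subset I$ pick a smooth cutoff $\chi$ with $\chi\equiv 1$ on $K$ and $\supp\chi\subset I$. Then $\chi f_{r/2+1}$ is globally $C^1$, so the differentiated Abel (Poisson) means of its extension converge uniformly to its derivative, which equals $f_{r/2+1}'$ on $K$; meanwhile the extension of $(1-\chi)f_{r/2+1}$ has boundary data vanishing on $K$, so by Schwarz reflection it is real-analytic across $K$ with vanishing tangential derivative there. Adding the two pieces gives $2\pi i\,G(x+iy)\to f_{r/2+1}'(x)$ locally uniformly on $I$, so $G$ is continuous on $\overline{\mathbb{H}}$ near every point of $I$.

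The second step is a Cauchy estimate. Fix an irrational $x_0\in I$ (hence not a cusp) and set $k=\lfloor r/2\rfloor+1$. Writing $G^{(k)}(x_0+iy)$ as a contour integral of $\big(G(w)-G(x_0)\big)\big(w-x_0-iy\big)^{-k-1}$ over the circle of radius $y/2$ about $x_0+iy$, and bounding $|G(w)-G(x_0)|$ by the modulus of continuity $\omega(3y/2)$ of $G$ near $x_0$, I obtain
\[
 \big|G^{(k)}(x_0+iy)\big|\le 2^{k}k!\,\omega(3y/2)\,y^{-k}=o\big(y^{-k}\big),\qquad y\to 0^+.
\]
The improvement of the little-$o$ over the trivial bound $O(y^{-k})$ is exactly what the boundary continuity of $G$ buys.

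The third step recovers $f$ from $G$ by a vertical fractional integration. A term-by-term computation shows that, with $\beta=k-r/2\in(0,1]$,
\[
 f(z)=\frac{(2\pi)^{\beta}}{\Gamma(\beta)}(2\pi i)^{-k}\int_0^\infty t^{\beta-1}G^{(k)}(z+it)\,dt .
\]
Putting $z=x_0+iy$, substituting $t=yu$, and splitting the integral at a small fixed height — using the estimate above on the lower part and the exponential decay $G^{(k)}(x_0+is)=O\big(e^{-2\pi s}\big)$ on the upper part — dominated convergence yields $f(x_0+iy)=o\big(y^{-r/2}\big)$, contradicting \cite[Lemma~3.4]{chamizo} and proving $f_{r/2+1}\notin C^{1,0}(I)$.

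The step I expect to be the main obstacle is the first one: justifying rigorously that the $C^1$ hypothesis on $I$ forces the non-absolutely-convergent series $G=f_{r/2}$ to possess continuous boundary values on $I$, controlled finely enough (a genuine modulus of continuity up to the boundary) for the Cauchy estimate to apply. This is a local boundary-regularity statement for the Abel means of a differentiated Fourier series, and it is presumably here — rather than in the clean complex-analytic core of steps two and three — that the two auxiliary results enter. The Poisson-kernel integration-by-parts technology already used for \eqref{condition} in the proof of Theorem~\ref{th:general} is the natural tool to make it precise.
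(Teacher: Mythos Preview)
Your argument is correct and takes a genuinely different route from the paper.  The paper proceeds by \emph{propagation}: it first shows the global fact $f_{r/2+1}\notin C^{1,0}(\R)$ via Fej\'er's theorem, Parseval, and the lower bound $\sum_{k\le n}|a_k|^2\gg n^r$ (Lemma~\ref{fejer_c1}), and then establishes an approximate functional equation $f_{r/2+1}(x)-C\big(j_\gamma(x)\big)^{2}f_{r/2+1}(\gamma x)\in C^{1,0}(I)$ coming from the modular law (Lemma~\ref{local_frac}); the group action then carries $C^{1,0}$-regularity from any interval to all of~$\R$, contradicting the global lemma.  Your approach is instead purely local and complex-analytic: the $C^1$ hypothesis on $I$ pushes continuous boundary values onto the critical integral $G=f_{r/2}$, a Cauchy estimate yields $G^{(k)}(x_0+iy)=o(y^{-k})$, and vertical fractional integration recovers $f(x_0+iy)=o(y^{-r/2})$, contradicting \cite[Lemma~3.4]{chamizo} at an irrational $x_0\in I$.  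The step you flagged is indeed the only delicate one, but your cutoff/Schwarz-reflection sketch is sound (apply reflection to the real and imaginary parts of the harmonic Poisson extension of $(1-\chi)f_{r/2+1}$, which is continuous with zero boundary data on~$K$); note that the paper's two auxiliary lemmas are \emph{not} aimed at this boundary-regularity issue at all and play no role in your scheme.  Your route is more direct and, incidentally, does not appear to need the hypothesis $r\ge 4$, whereas the paper's functional-equation lemma genuinely uses it; conversely, the paper's method produces Lemma~\ref{local_frac} as a structural byproduct of independent interest.
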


\begin{proof}[Proof of Theorem~\ref{pr:cusp_restricted}]
By Theorem \ref{pr:modforms} and Theorem \ref{pr:cusp_rat} we deduce that $f_\alpha$ is
$\lfloor \alpha -r/2 \rfloor$ times differentiable on $\R$ and, since $\alpha'=\alpha-\lfloor\alpha -r/2\rfloor >r/2$, for $\alpha-r/2\not\in\Z^+$, we have
\begin{equation}\label{eq:intder}
f_\alpha^{(\lfloor\alpha-r/2\rfloor)}
=
(2\pi i)^{\lfloor\alpha-r/2\rfloor}
f_{\alpha'}.
\end{equation}
It follows from our assumptions that $\alpha'\geq1+\lfloor2\alpha'-r\rfloor$. Hence by Theorem~\ref{pr:cusp_rat} we have
$\beta(x_0)=2\alpha'-r=2\{\alpha-r/2\}$ for $f_{\alpha'}$ and $x_0\in\Q$. 

If $0\ne \{\alpha-r/2\}\leq1/2$, then $\beta(x_0)\leq1$ implies that $\beta^*(x_0)=\beta(x_0)=2\{\alpha-r/2\}$  for $f_{\alpha'}$ and, by \eqref{eq:intder}, $\beta^*(x_0)=\lfloor \alpha-r/2\rfloor+2\{\alpha-r/2\}$ for $f_\alpha$ and $x_0\in\Q$.

If $\{\alpha-r/2\}> 1/2$,
then $\beta(x_0)>1$ for $f_{\alpha'}$, in particular we have $f_{\alpha'}\in \Lambda^1(x_0)$ and $\beta^*(x_0)\ge 1$ for $f_{\alpha'}$, using \eqref{rlhe} with $k=0$. 
If $\beta^*(x_0)> 1$ for $f_{\alpha'}$, then $f_{\alpha'}$ is differentiable in an open set, contradicting Corollary~2.1.1 of \cite{chamizo}. Hence $\beta^*(x_0)= 1$ for $f_{\alpha'}$ 
and we conclude $\beta^*(x_0)= 1+\lfloor\alpha-r/2\rfloor$ for $f_\alpha$
from \eqref{eq:intder}.

Finally,  if $\{\alpha-r/2\}=0$, we use
$f_\alpha^{(\alpha-r/2-1)}=(2\pi i)^{\alpha-r/2-1}f_{r/2+1}$ instead of \eqref{eq:intder}
and the same argument as in the last paragraph works if we prove that $f_{r/2+1}$ is not continuously differentiable on any open set.  The condition $\alpha\ge 1+\lfloor2\alpha-r\rfloor$ reads $r\ge \alpha+1$ and, as $\alpha-r/2\in\Z^+$, necessarily $r\ge 4$ and  we can appeal to Lemma~\ref{notc1}. 
\end{proof}

\

Another direction is to consider the situation in which the modular form is not a cusp form. In \cite{jaffard}, S.~Jaffard considered the so-called Riemann's example (see \cite{hardy}, \cite{gerver}, \cite{duistermaat}),
\[
 R(x)=\sum_{n=1}^\infty\frac{\sin(\pi n^2x)}{n^2},
\]
that corresponds to $f_1^s$ when $f$ is the Jacobi theta function, and he determined $\beta(x)$ for $x\in\R$ showing that $R$ is multifractal.

The problem for generic modular forms (not necessarily cusp forms) has been addressed by the second author in \cite{petrykiewicz1} and by the first and third authors in a forthcoming paper (see \cite{ruizcabello}).
Here we consider the behavior at rational values $p/q$. They are cusps for the underlying (congruence) group. 

There are two possible behaviors of a modular form $f$ of weight $r$ when taking vertical limits towards a cusp $x_0=p/q$. 
Either $f(p/q+iy)=o(y^{-r})$ as $y\to 0^+$ or $y^rf(p/q+iy)\to C\ne 0$ as $y\to 0^+$. In the first case we say that $f$ is \emph{cuspidal} at $p/q$ and, accordingly, we say that it is \emph{not cuspidal} at $p/q$ in the second case. The proof of this alternative depends on the Fourier expansion at the cusp \cite{iwaniec}, \cite{chamizo}. In the first case we have indeed an exponential decay $f(p/q+iy)=O(e^{-K/y})$ for some $K>0$.

\

\begin{theorem}\label{pr:notcuspidal}
Let $f(z)=\sum_{n=0}^{\infty}a_ne(nz)$ be a modular form of weight~$r$. Consider $f_\alpha$ with $\alpha>r>0$ and $1+\lfloor\alpha-r\rfloor\le \alpha$. If $f$ is not cuspidal at the irreducible fraction $p/q$, then
$\beta(p/q) = \alpha-r$.
\end{theorem}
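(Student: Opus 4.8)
The plan is to pin down $\beta(p/q)$ by two applications of Proposition~\ref{pr:jaffard}, one for each inequality; the hypothesis $1+\lfloor\alpha-r\rfloor\le\alpha$ is exactly what guarantees $\lfloor\beta\rfloor\le\alpha-1$ when $\beta$ is close to $\alpha-r$, so that the proposition applies. Throughout I write $x_0=p/q$ and use that $f$ is holomorphic with $a_n=0$ for $n<0$, so that both parts of Proposition~\ref{pr:jaffard} are available for $f_\alpha$.

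For the upper bound $\beta(x_0)\le\alpha-r$ I would argue by contradiction. If $\beta(x_0)>\alpha-r$, then $f_\alpha\in C^{\alpha-r+\delta}(x_0)$ for some small $\delta>0$; since $\lfloor\alpha-r+\delta\rfloor=\lfloor\alpha-r\rfloor\le\alpha-1$ for $\delta$ small (whether or not $\alpha-r$ is an integer), the bound \eqref{wbound1} gives $f(x+iy)=O\big(y^{-r+\delta}(1+y^{-1}|x-x_0|)^{\alpha-r+\delta}\big)$. Evaluating along the vertical line $x=x_0$ yields $f(x_0+iy)=O(y^{-r+\delta})$. This contradicts the non-cuspidality hypothesis, which says precisely that $y^r f(x_0+iy)\to C\ne0$, so that $|f(x_0+iy)|\sim|C|\,y^{-r}$ and in particular $f(x_0+iy)\ne o(y^{-r})$.

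For the lower bound $\beta(x_0)\ge\alpha-r$ I would verify the converse condition \eqref{wbound2} with $\beta=\alpha-r$. The input is the standard uniform growth estimate for holomorphic modular forms of weight $r$, namely $|f(x+iy)|=O(y^{-r})$ for $0<y\le1$, uniformly in $x$; this follows from the $\Gamma$-invariance of $y^{r/2}|f(z)|$ together with the Fourier expansions at the cusps (cf. \cite{iwaniec}, \cite{chamizo}). Since $(1+y^{-1}|x-x_0|)^{\beta'}\ge1$, this uniform bound trivially implies $f(x+iy)=O\big(y^{-r}(1+y^{-1}|x-x_0|)^{\beta'}\big)$ for any $\beta'\ge0$. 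Choosing $0<\beta'<\alpha-r$ (possible because $\alpha>r$) and again using $\lfloor\alpha-r\rfloor\le\alpha-1$, the bound \eqref{wbound2} yields $f_\alpha\in C^{\alpha-r}(x_0)$, hence $\beta(x_0)\ge\alpha-r$. Combining the two inequalities completes the proof.

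The main obstacle is the growth estimate feeding the lower bound: \eqref{wbound2} must hold throughout a neighborhood of $(x_0,0^+)$, so one genuinely needs $|f(x+iy)|\ll y^{-r}$ uniformly in $x$ and not merely along $x=x_0$. This forces control of $f$ near every cusp $p'/q'$ lying in the neighborhood, not just at $p/q$; the clean way to obtain it is through the invariant height $\max_{\gamma\in\Gamma}\operatorname{Im}(\gamma z)$, which is comparable to $y/|q'z-p'|^2$ in the relevant horoball and keeps $y^{r/2}|f|$ under control everywhere. It is worth noting that, in contrast to the cusp-form case treated in Theorem~\ref{pr:cusp_rat}, here $f$ enjoys no extra decay at $x_0$, and this is precisely why the pointwise exponent settles at $\alpha-r$ instead of jumping upward.
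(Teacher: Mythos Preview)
Your proof is correct. The upper bound $\beta(p/q)\le\alpha-r$ is handled exactly as in the paper: assume $f_\alpha\in C^{\alpha-r+\delta}(p/q)$, apply the direct part of Proposition~\ref{pr:jaffard}, and specialize to $x=p/q$ to contradict non-cuspidality.

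For the lower bound the two arguments diverge. The paper argues by contradiction, in parallel with the proof of Theorem~\ref{pr:cusp_rat}: assuming $f_\alpha\notin C^{\alpha-r-\delta}(p/q)$, the contrapositive of \eqref{wbound2} produces sequences $x_n\to p/q$, $y_n\downarrow 0$ along which the wavelet bound blows up, and a dichotomy on whether $y_n^{-1}|x_n-p/q|$ stays bounded is then resolved using \cite[Lemma~3.3]{chamizo} in one case and the uniform bound $|f(z)|=O(y^{-r})$ in the other. Your route is more direct: the single estimate $|f(x+iy)|=O(y^{-r})$, valid uniformly in $x$ for any holomorphic modular form of weight $r$ via the $\Gamma$-invariance of $y^{r/2}|f(z)|$ together with the Fourier expansions at the cusps, already furnishes \eqref{wbound2} with $\beta=\alpha-r$ and any $0<\beta'<\alpha-r$, so Proposition~\ref{pr:jaffard} applies at once. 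Both approaches rest on the same global growth estimate; yours simply avoids the sequence extraction and case split. The paper's format has the virtue of reusing verbatim the template from Theorem~\ref{pr:cusp_rat}, where the dichotomy is genuinely needed because the behavior of a cusp form near the rational point (exponential decay $O(e^{-K/y})$) is qualitatively different from its behavior elsewhere ($O(y^{-r/2})$); in the non-cuspidal setting that refinement is unnecessary, which is why your shortcut works.
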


\begin{proof}
If $\beta(p/q) > \alpha-r$, then $f_\alpha\in C^{\alpha-r+\delta}(p/q)$ for some $\delta>0$, that we assume small enough to have $\lfloor\alpha-r+\delta\rfloor\le \alpha-1$. By Proposition~\ref{pr:jaffard}
\[
f(x+iy)=
O\big( y^{\delta-r}\big(1+y^{-1}|x-p/q|\big)^{\alpha-r+\delta}\big)
\qquad\text{when }(x,y)\to (p/q,0^+).
\]
Choosing $x=p/q$ we have $y^rf(p/q+iy)=O\big(y^\delta\big)=o(1)$ and it contradicts that $f$ is not cuspidal at $p/q$, therefore we have $\beta(p/q) \leq \alpha-r$.

For the optimality, we proceed as in the proof of Theorem~\ref{pr:cusp_rat}. If we assume that $\beta(p/q)<\alpha-r$, then $f_\alpha\notin C^{\alpha-r-\delta}$ for some $0<\delta<\alpha-r$. By Proposition~\ref{pr:jaffard}, we conclude that 
$f(x+iy) \neq O(y^{-r-\delta}(1+y^{-1}|x-p/q|\big)^{\alpha-r-\delta}$ as $(x,y)\to(p/q,0^+)$. In particular, there exist
$x_n\to p/q$ and $y_n\downarrow 0$ such that
\begin{equation}\label{eq:lowerbound}
\lim_{n\to\infty} |f(x_n+iy_n)|y_n^{r+\delta}(1+y_n^{-1}|x_n-p/q|\big)^{-\alpha+r+\delta}\to \infty.
\end{equation} 
Then we can extract a subsequence such that either $y_n^{-1}|x_n-p/q|$ is bounded or it tends to $\infty$. If $y_n^{-1}|x_n-p/q|$ is bounded, then $y_n^{-1}|x_n-p/q|^2\to 0$, and since $f$ is not cuspidal at $p/q$ \cite[Lemma~3.3]{chamizo} we have $|f(x_n+iy_n)|=O(y_n^{-r})$ which contradicts \eqref{eq:lowerbound}. 
On the other hand if $y_n^{-1}|x_n-p/q|\to \infty$, then \eqref{eq:lowerbound} implies that $|f(x_n+iy_n)|y_n^{r+\delta}\to \infty$ contradicting that $|f(x_n+iy_n)|=O(y_n^{-r})$ as $y_n\to 0^+$.
\end{proof}

\

To finish this section, we are going to prove Lemma~\ref{notc1}. In order to do so, we need an application of basic harmonic analysis (essentially Fej\'er's theorem) and a kind of approximate functional equation for fractional derivatives.

\begin{lemma}\label{fejer_c1}
If $f$ is a cusp form of weight $r>0$,  then 
$f_{r/2+1}\not\in C^{1,0}(\R)$.
\end{lemma}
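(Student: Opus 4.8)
The plan is to argue by contradiction. Suppose $f_{r/2+1}\in C^{1,0}(\R)$ and write $F:=f_{r/2+1}$, a $1$-periodic function whose derivative $F'$ is continuous. Reading off \eqref{frac_int} (with $a_n=0$ for $n\le 0$, since $f$ is a cusp form) the Fourier coefficients of $F$ are $\widehat F(n)=a_n n^{-r/2-1}$ for $n\ge 1$, and integration by parts against $e(-nx)$ gives $\widehat{F'}(n)=2\pi i\, n\,\widehat F(n)=2\pi i\, a_n n^{-r/2}$ for $n\ge 1$ and $0$ otherwise, the boundary terms cancelling by periodicity. Thus the nonnegative quantity I want to control is $\sum_{n\ge 1}|a_n|^2 n^{-r}$, which equals $\|F'\|_{L^2}^2/(4\pi^2)$ once $F'\in L^2$.

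The harmonic-analytic step is to feed the continuity of $F'$ into this sum through Fej\'er's theorem. For the Ces\`aro (Fej\'er) means $\sigma_N(F')$ one has $\|\sigma_N(F')\|_\infty\le\|F'\|_\infty$, since the Fej\'er kernel is nonnegative with integral $1$; hence $\|\sigma_N(F')\|_{L^2}\le\|\sigma_N(F')\|_\infty\le\|F'\|_\infty$ for every $N$. On the other hand, by Parseval applied to the trigonometric polynomial $\sigma_N(F')$,
\[
 \big\|\sigma_N(F')\big\|_{L^2}^2
 =4\pi^2\sum_{n=1}^N\Big(1-\frac{n}{N+1}\Big)^2\frac{|a_n|^2}{n^r}.
\]
Letting $N\to\infty$, Fatou's lemma (the weights increase to $1$ for each fixed $n$) yields $4\pi^2\sum_{n\ge 1}|a_n|^2 n^{-r}\le\|F'\|_\infty^2<\infty$. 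This is the only place where I really use that $F'$ is continuous rather than merely bounded, and working with the Fej\'er means rather than the raw Fourier series of $F'$ sidesteps any convergence issue.

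It remains to contradict $\sum_{n\ge1}|a_n|^2 n^{-r}<\infty$, and this is where cuspidality enters decisively. By the Rankin--Selberg estimate one has $\sum_{n\le N}|a_n|^2\asymp N^r$ for a nonzero cusp form of weight $r$; partial summation then gives $\sum_{n\ge1}|a_n|^2 n^{-r}\gg\int_1^\infty t^{-1}\,dt=\infty$, the desired contradiction. Since the argument nowhere used an interval, it proves the statement on all of $\R$.

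I expect the main obstacle to be precisely this last input, the lower bound $\sum_{n\le N}|a_n|^2\gg N^r$ (equivalently, the non-vanishing of the second moment, i.e. the $L^2$-mean of $f$ near the cusps). A tempting shortcut would be to use only the already established growth $f(x_0+iy)\ne o(y^{-r/2})$ at a single non-cusp point $x_0$; however, turning a one-point lower bound into a lower bound for $\sum|a_n|^2 n^{-r}$ fails, because the corresponding single-point or partial-summation estimates are subject to cancellation, whereas $\sum|a_n|^2 n^{-r}$ is built from nonnegative terms. This is exactly why the proof is routed through the \emph{positive} quantity $\|\sigma_N(F')\|_{L^2}^2$ together with a genuine second-moment (Rankin--Selberg) input, rather than through the pointwise bound.
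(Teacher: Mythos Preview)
Your proof is correct and follows essentially the same route as the paper's: assume $F'=f_{r/2+1}'$ continuous, use Fej\'er means and Parseval to bound $\sum_{n\ge1}|a_n|^2 n^{-r}$ by a finite quantity (you use $\|F'\|_\infty^2$, the paper uses $\|F'\|_{L^2}^2$), and then invoke the Rankin--Selberg lower bound $\sum_{n\le N}|a_n|^2\gg N^r$ (cited in the paper as \cite[Lemma~3.2~c)]{chamizo}) together with partial summation to obtain the divergent harmonic series and a contradiction. The only cosmetic differences are your use of the contraction $\|\sigma_N(F')\|_\infty\le\|F'\|_\infty$ in place of the paper's uniform convergence $\sigma_N(F')\to F'$, and Fatou in place of truncation at $N/2$; these are interchangeable.
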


\begin{lemma}\label{local_frac}
Let $f$ be a cusp form of weight $r\ge 4$, $\gamma$ an element of the underlying congruence group and $I\subset\R$ an open bounded interval not containing the possible pole of $\gamma$.
Then, there exist a nonzero constant $C$ such that 
$f_{r/2+1}(x)-C\big(j_\gamma(x)\big)^2 f_{r/2+1}\big(\gamma(x)\big)\in C^{1,0}(I)$.
\end{lemma}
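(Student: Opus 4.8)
The plan is to derive an exact integral identity linking $f_{r/2+1}(x)$ and $(j_\gamma(x))^2f_{r/2+1}(\gamma x)$, and then to show that the discrepancy gains one degree of smoothness. Writing $\alpha=r/2+1$ and using $e(n\cdot it)=e^{-2\pi nt}$ together with the Gamma integral, one verifies the contour representation
\[
 f_{r/2+1}(x)=c_\alpha\int_x^{x+i\infty}f(w)(w-x)^{r/2}\,dw,
\]
where the integral runs along the vertical ray, $f$ denotes the holomorphic function $\sum_{n\ge1}a_ne(nz)$ on $\mathbb{H}$, and $c_\alpha$ is an explicit nonzero constant. First I would write the same identity at the point $\gamma x$ and deform the vertical ray issuing from $\gamma x$ onto the geodesic $\{\gamma(x+is):s>0\}$, which joins $\gamma x$ to the cusp $\gamma(\infty)$. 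Substituting $w=\gamma u$ and using the cocycle identities $w-\gamma x=(u-x)\big(j_\gamma(u)j_\gamma(x)\big)^{-1}$ and $dw=j_\gamma(u)^{-2}\,du$, the transformation law $f(\gamma u)=m_\gamma j_\gamma(u)^rf(u)$, and the branch conventions consistent with the multiplier system, all powers of $j_\gamma$ collect into
\[
 f_{r/2+1}(\gamma x)=m_\gamma\,j_\gamma(x)^{-r/2}c_\alpha\int_x^{x+i\infty}f(u)(u-x)^{r/2}j_\gamma(u)^{r/2-2}\,du.
\]

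Next I would isolate the endpoint behaviour. Writing $j_\gamma(u)=c(u-x)+j_\gamma(x)$ gives $j_\gamma(u)^{r/2-2}=j_\gamma(x)^{r/2-2}\big(1+c(u-x)/j_\gamma(x)\big)^{r/2-2}$, so the two surviving powers of $j_\gamma(x)$ combine to exactly $j_\gamma(x)^{-2}$; this is the origin of the factor $(j_\gamma(x))^2$ in the statement. Choosing $C=m_\gamma^{-1}$ (nonzero since $|m_\gamma|=1$), the leading terms cancel and
\[
 f_{r/2+1}(x)-C\,(j_\gamma(x))^2f_{r/2+1}(\gamma x)=-c_\alpha\int_x^{x+i\infty}f(u)(u-x)^{r/2}\Big[\big(1+\tfrac{c(u-x)}{j_\gamma(x)}\big)^{r/2-2}-1\Big]\,du=:E(x).
\]
When $r=4$ the exponent $r/2-2$ vanishes, the bracket is identically zero, and the functional equation is exact (the classical Eichler case). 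For $r>4$ the bracket vanishes to first order at $u=x$, so the integrand of $E$ behaves like $(u-x)^{r/2+1}$ near the endpoint, one power better than the integrand producing $f_{r/2+1}$ itself.

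I would then prove $E\in C^{1,0}(I)$ by differentiating under the integral sign, which is legitimate because $I$ is bounded and avoids the pole $-d/c$, so that $j_\gamma$ stays bounded away from $0$ and $\infty$ on $I$. Each of the three resulting contributions (from differentiating $f$, the factor $(u-x)^{r/2}$, or the bracket) carries, relative to $f_{r/2+1}$, one extra factor $u-x$. Estimating these integrals by the very partial summation used in the proof of Theorem~\ref{th:general}, with $\sum_{n\le N}a_ne(nx)=O_\varepsilon(N^{r/2+\varepsilon})$, this extra factor improves the relevant exponent by one and makes the estimate convergent uniformly on $I$; heuristically the associated Fourier series pass from effective coefficients of size $n^{-1/2}$ to $n^{-3/2+\varepsilon}$. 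This is the crucial point, since for $f_{r/2+1}$ the formally differentiated series sits exactly at the non-summable borderline $n^{-1/2}$ responsible for $f_{r/2+1}\notin C^{1,0}$ (Lemma~\ref{notc1}), and the subtraction is designed precisely to remove that borderline part.

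The main obstacle is this last step: one must verify rigorously that the subtraction cancels the conditionally convergent (borderline) part of the derivative and that the remainder is genuinely continuous, uniformly on $I$ — so that $E$ behaves like a fractional integral of order $r/2+2$, which lies in $C^{1,1-\varepsilon}(I)\subset C^{1,0}(I)$ by Theorem~\ref{th:general}, the mild complication being the dependence of the bracket on $x$ through $j_\gamma(x)$. A secondary technical point is the justification of the contour deformation, where the arc at infinity is controlled by the exponential decay of the cusp form at both cusps $\gamma(\infty)$ and $i\infty$ while $(w-\gamma x)^{r/2}$ grows only polynomially, and one must check that the deforming region stays in $\mathbb{H}$, hence off the branch cut of $(w-\gamma x)^{r/2}$.
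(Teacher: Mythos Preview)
Your outline follows the paper's proof closely: the contour representation of $f_{r/2+1}$, the change of variables $w=\gamma u$ using the modularity of $f$ and the cocycle $w-\gamma x=(u-x)/\big(j_\gamma(u)j_\gamma(x)\big)$, and the observation that the case $r=4$ is exact because the exponent $r/2-2$ vanishes. Two points deserve correction.

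First, the contour deformation does \emph{not} give the clean formula you wrote. The vertical ray from $\gamma x$ ends at $i\infty$, while the geodesic $\{\gamma(x+is)\}$ ends at the real cusp $\gamma(\infty)$; closing the contour leaves the vertical ray from $\gamma(\infty)$, whose contribution is a nonzero but $C^\infty$ function $h(x)$ (by the exponential decay of the cusp form at $\gamma(\infty)$). The paper keeps this $h(x)$ explicitly; you should too, since it is not an ``arc at infinity'' that vanishes.

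Second, for $r>4$ the paper does not argue via partial summation. It performs an integration by parts with $u=\big(j_{\gamma^{-1}}(\gamma(x)+it)\big)^{r/2-2}$ and $dv=t^{r/2}f(\gamma(x)+it)\,dt$, which is exactly your ``subtract the constant term'' move: the boundary term $uv\big|_{t=0}$ produces the main piece $C\,j_\gamma(x)^{2}f_{r/2+1}(\gamma(x))$. The remainder $\int v\,du$ is then shown to lie in $C^{1,0}(I)$ by differentiating under the integral sign and using only the Hecke bound $|f(z)|=O\big((\Im z)^{-r/2}\big)$: the extra factor of $t$ coming from $du$ makes $\partial g/\partial x=O(t^{r/2})$ near $t=0$, which matches $f=O(t^{-r/2})$ to give a bounded, continuous integrand. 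This is cleaner than the partial-summation heuristic you sketch and avoids the ``borderline $n^{-1/2}$'' bookkeeping entirely.
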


\begin{proof}[Proof of Lemma~\ref{fejer_c1}]
Assume that $f'_{r/2+1}$ is continuous. By partial integration, its Fourier coefficients are $2\pi ia_n/n^{r/2}$, where $a_n$ are the Fourier coefficients of $f$, and by 
Fej\'er's theorem
\[
 2\pi i
\sum_{n=1}^N
\Big(1-\frac{n}{N}\Big)
\frac{a_n}{n^{r/2}}e(nx)
\underset{\text{unif.}}{\longrightarrow}
f_{r/2+1}'
\qquad
\text{as }N\to\infty.
\]
Taking $L^2[0,1]$ norms, applying Parseval's identity and dropping the terms with $n>N/2$, we have
\[
\frac{1}{\pi^2}\|f_{r/2+1}'\|^2_{L^2[0,1]}
\ge
\sum_{n\le N/2}
\frac{|a_n|^2}{n^r}
\ge
\sum_{n\le N/2-1}
\big(
\frac{1}{n^r}
-\frac{1}{(n+1)^r}
\big)
\sum_{k=1}^n|a_k|^2.
\]
By Lemma~3.2~c) of \cite{chamizo}, the innermost sum is greater than $Cn^r$ for large~$n$. Then  the 
right hand side is of the same order as the harmonic series that diverges. Hence $f_{r/2+1}'$ cannot be continuous on~$[0,1]$. 
\end{proof}

\begin{proof}[Proof of Lemma~\ref{local_frac}]
For the sake of simplicity, along the proof we write $C$ to denote a nonzero constant depending (at most) on $r$ and $\gamma$, not necessarily the same constant each time. 

We assume $\gamma(\infty)\ne\infty$ because otherwise $\gamma$ is an integral translation and the result is trivial ($f$ is $1$-periodic). Then $j_{\gamma}$ and $j_{\gamma^{-1}}$, 
the denominators of the transformations $\gamma$ and $\gamma^{-1}$,
are nonconstant linear functions.

By direct integration of the Fourier series
\begin{equation}\label{eq:fgamma}
f_{r/2+1}(x)=
C\int_0^\infty t^{r/2}f(x+it)\; dt
=
\int_{(x)} (z-x)^{r/2}f(z)\; dz
\end{equation}
where $(\sigma)$ means the ray $\Re(z)=\sigma$,  $\Im(z)>0$.
Note the similarity between this integral and the wavelet transform \eqref{wa_tr}.

By \eqref{eq:modrel} and a change of variables (see \cite[\S2.1]{iwaniec} for the properties of~$j_\gamma$)
\begin{align*}
f_{r/2+1}(x)
&=
C\int_{(x)} (z-x)^{r/2}\big(j_\gamma(z)\big)^{-r}f(\gamma z)\; dz
\\ 
&=
C\int_{S} \big(\gamma^{-1}w-x\big)^{r/2}\big(j_{\gamma^{-1}}(w)\big)^{r-2}f(w)\; dw
\end{align*}
where $S$ is the semicircle in the upper half plane connecting $\gamma(x)$ and $\gamma(\infty)$. By the residue theorem and the vanishing at $i\infty$, the integral can be replaced by two integrals along the rays $\big(\gamma(x)\big)$ and $\big(\gamma(\infty)\big)$. As $\gamma(\infty)$ is a cusp, $f(w)$ has exponential decay at $\gamma(\infty)$ and the latter integral defines a regular function $h=h(x)$. Hence
\[
f_{r/2+1}(x)
=
C\int_{(\gamma(x))} \big(\gamma^{-1}w-x\big)^{r/2}\big(j_{\gamma^{-1}}(w)\big)^{r-2}f(w)\; dw
+h(x).
\]
Substituting the relation  
$\big(\gamma^{-1}w-x\big)j_{\gamma^{-1}}(w)
=\big(w-\gamma(x)\big)j_{\gamma}(x)$ \cite[(2.4)]{iwaniec} in this formula, it reads
\[
f_{r/2+1}(x)
=
C
\big(j_{\gamma}(x)\big)^{r/2}
\int_{(\gamma(x))} 
\big(w-\gamma(x)\big)^{r/2}\big(j_{\gamma^{-1}}(w)\big)^{r/2-2}f(w)\; dw
+h(x).
\]
If $r=4$, the exponent $r/2-2$ vanishes and we are done using \eqref{eq:fgamma}. Then we assume $r>4$. 

Writing  $w=\gamma(x)+it$, the integral is, up to a constant,
\[
 \int_0^\infty
\big(j_{\gamma^{-1}}(\gamma(x)+it)\big)^{r/2-2}
t^{r/2}f\big(\gamma(x)+it\big)
\; dt
=
\int_0^\infty u\; dv
\]
where
\[
 u=
\big(j_{\gamma^{-1}}(\gamma(x)+it)\big)^{r/2-2}
\qquad\text{and}\qquad
v=-\int_t^{\infty}y^{r/2}f\big(\gamma(x)+iy\big)\; dy.
\]
Integrating by parts, recalling \eqref{eq:fgamma} and using once again the properties of~$j_\gamma$ to simplify, we obtain
\begin{equation}\label{eq:forbit}
f_{r/2+1}(x)
=
C
\big(j_{\gamma}(x)\big)^{2}
f_{r/2+1}\big(\gamma(x)\big)+h(x)
-
\big(j_{\gamma}(x)\big)^{r/2}
\int_0^\infty
v\; du.
\end{equation}
It remains to prove that the integral defines a $C^{1,0}(I)$ function of $x$.

Changing the order of integration, we have 
\begin{align*}
\int_0^\infty
v\; du
&=
C\int_0^\infty\int_0^y 
\big(j_{\gamma^{-1}}(\gamma(x)+it)\big)^{r/2-3}
y^{r/2}f(\gamma(x)+it)\; dtdy
\\ 
&=
C\int_{(\gamma(x))}
g(z,x)f(z)\; dz 
\end{align*}
where
\[
 g(z,x)=
\Big(
\big(j_{\gamma^{-1}}(\gamma(x)+iy)\big)^{r/2-2}
-
\big(j_{\gamma^{-1}}(\gamma(x))\big)^{r/2-2}
\Big)
\big(z-\gamma(x)\big)^{r/2}.
\]
As $g\big(\gamma(x),x\big)=0$,
\[
 \frac{d}{dx}
\int_0^\infty
v\; du
=
C\int_{(\gamma(x))}
\frac{\partial g}{\partial x}
(z,x)f(z)\; dz.
\]
This partial derivative is a continuous function and it is $O\big(t^{r/2}\big)$ as $t\to 0^+$ when evaluated at $z=\gamma(x)+it$. 
Note that
$j_{\gamma^{-1}}\big(\gamma(x)\big)$ is a regular linear fractional transformation because the pole of $\gamma$ is not in~$I$.
On the other hand, we know \cite[(5.3)]{iwaniec} that $f(\gamma(x)+it)=O\big(t^{-r/2}\big)$ as $t\to 0^+$ and it has an exponential decay for $t\to \infty$. 
Hence the integral in \eqref{eq:forbit} is in $C^{1,0}(I)$ and the result is proved.
\end{proof}

\begin{proof}[Proof of Lemma~\ref{notc1}]
Assume that $f_{r/2+1}\in C^{1,0}(I_0)$ for certain open interval $I_0$. Take an arbitrary real number~$x$. We can find $\gamma$ such that $\gamma(x)\in I_0$ (recall that the underlying group is a congruence group). Hence $f_{r/2+1}\circ \gamma\in C^{1,0}(I)$ for some open neighborhood $I$ of $x$. By  Lemma~\ref{local_frac}, the same applies for  $f_{r/2+1}$. As $x$ is arbitrary, we get a contradiction with  Lemma~\ref{fejer_c1}.
\end{proof}

\section{Examples and computer plots}

The Shimura-Taniyama-Weil conjecture allows to assign a cusp form to each elliptic curve over $\Q$. For instance,
\[
E:\; y^2+yx+y=x^3+4x-6
\]
gives a cusp form $f(z)=\sum_{n=1}^\infty a_n e(nz)$ of weight~2 in $\Gamma_0(14)$ where $a_p= p+1-N_p$ with $N_p$ the number of points of $E$ over $\mathbb{F}_p$ for a prime $p>7$ and the rest of the $a_n$ are determined by multiplicative properties \cite{knapp}. 

The series
\[
F(x)=\sum_{n=1}^\infty \frac{a_n}{n^\delta}\sin(2\pi nx)
\]
converges absolutely for any $\delta>1$ \cite[Lemma~3.2]{chamizo} and, according to Theorem~\ref{pr:modforms}, defines a continuous function such that $\beta(x_0)=\beta^{*}(x_0)=\beta^{**}(x_0)=\delta -1$ for $x_0\in\R\setminus\Q$. For instance, for $\delta =7/4$ the H\"older exponents are 3/4 and we have a fractal-like graph. Its global aspect for $0\le x\le 1/2$ is
\begin{center}
\begin{tabular}{c}
\includegraphics[width=90mm,height=60mm]{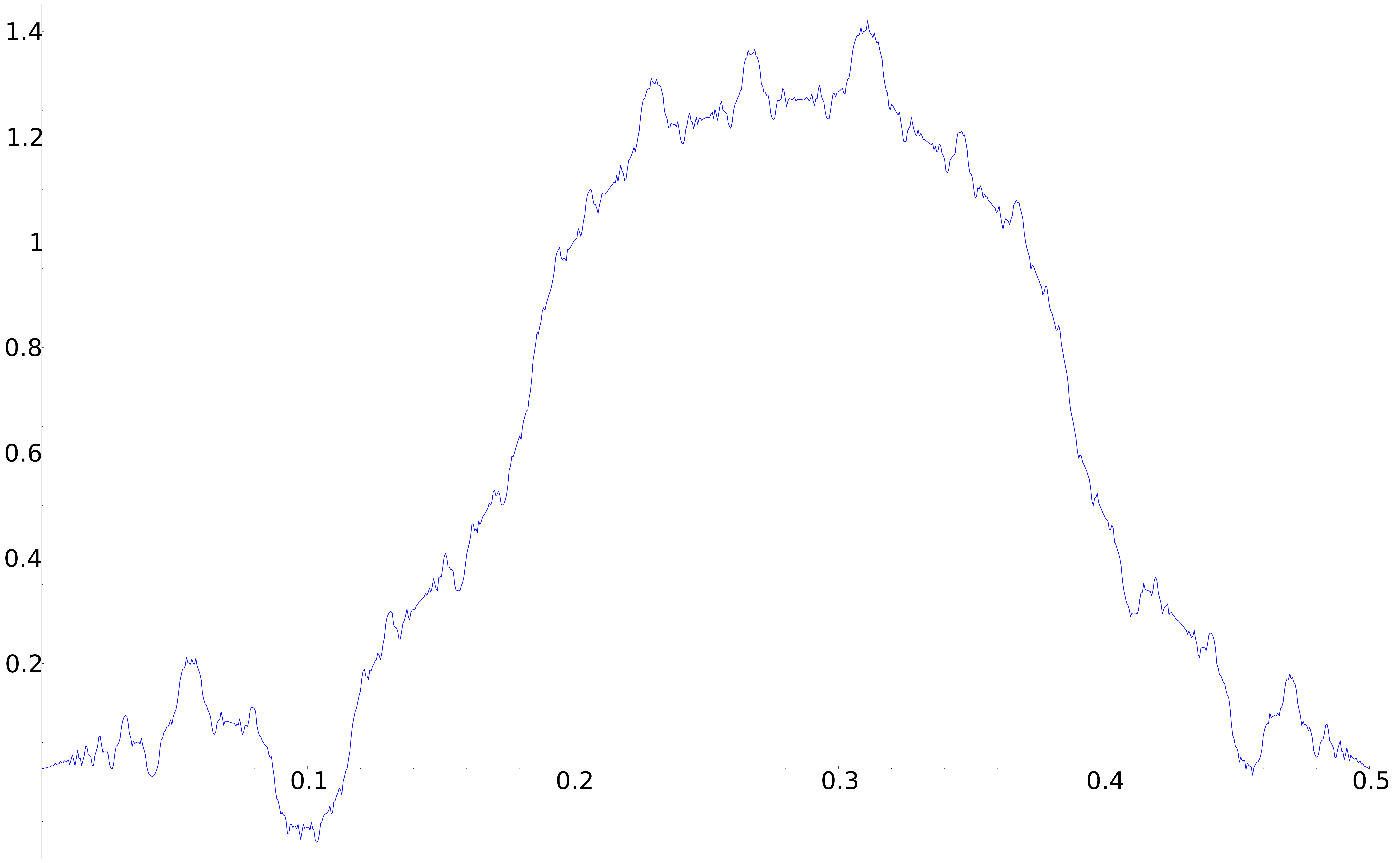}
\\
{\sc Fig.\;1}\quad
\sf Graph of $\sum_{n=1}^\infty a_n n^{-\delta}\sin(2\pi nx)$ for $0\le x\le 1/2$
\end{tabular}
\end{center}

\medskip

It is not a multifractal by Corollary~\ref{cr:modforms} (but it is fractal for $\delta\not\in\Z$). It can be proved \cite[Theorem~2.2]{chamizo} that it is differentiable in $\Q$. Consequently we see a different behavior when we zoom the graph around $\sqrt{2}-1$ and $0$, as the following figures show:
\begin{center}
\begin{tabular}{c}
\includegraphics[width=54mm,height=34mm]{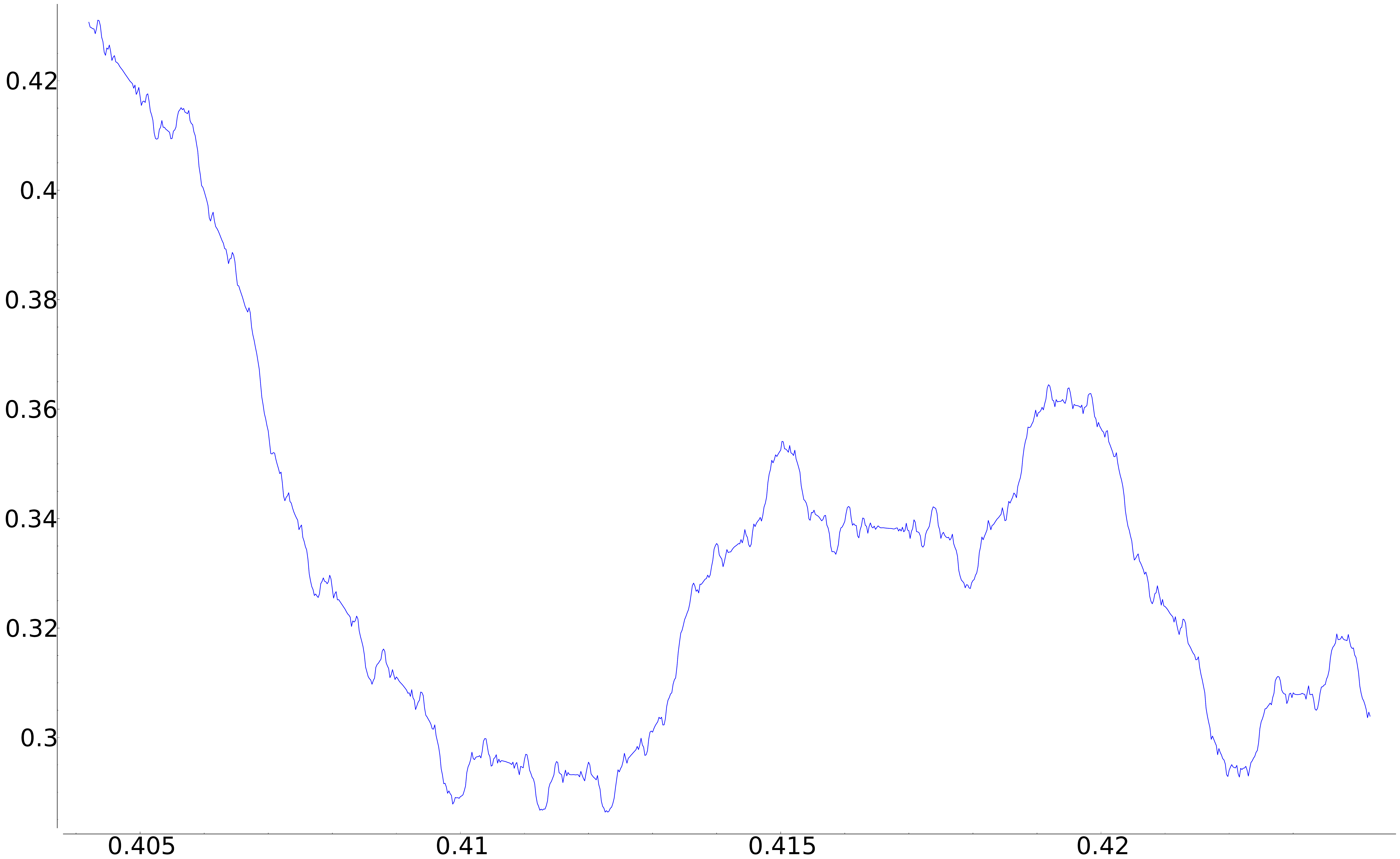}
\\
{\sc Fig.\;2}\quad
\sf Zoom $[0.4042,0.4242]$
\end{tabular}
\begin{tabular}{c}
\includegraphics[width=54mm,height=34mm]{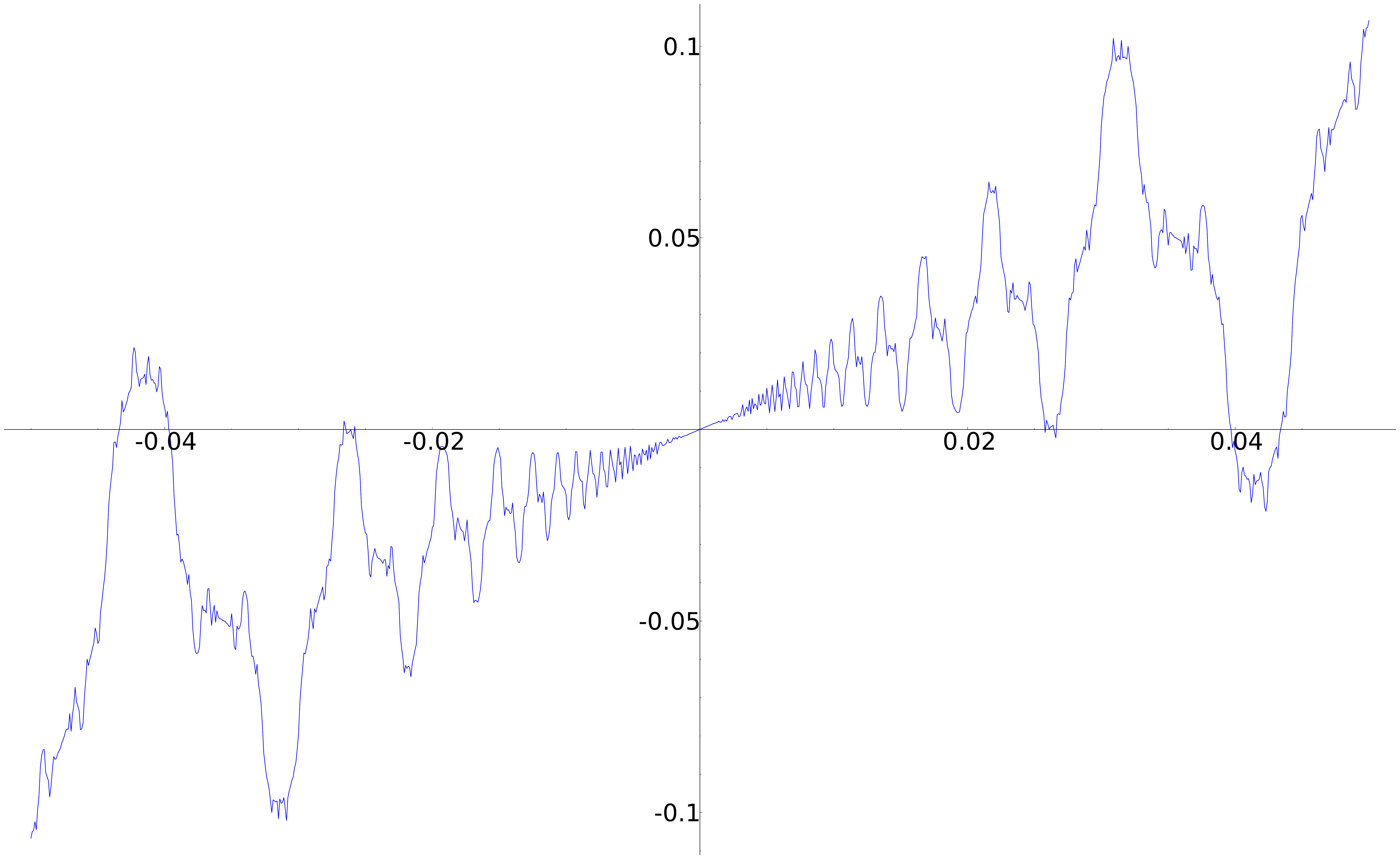}
\\
{\sc Fig.\;3}\quad
\sf Zoom $[-0.05,0.05]$
\end{tabular}
\end{center}

\medskip

The theta series produce quite explicit and simple examples. 
For instance,
\begin{equation}\label{theta_example}
 F(x)= \sum_{n=1}^\infty
 \frac{c_n}{n^{\delta}}\cos(2\pi n^2x)
 \quad\text{with}\quad
 c_n=\begin{cases}
      1&\text{if $n\equiv\pm 1\pmod{12}$}
      \\
      -1&\text{if $n\equiv\pm 5\pmod{12}$}
     \end{cases}
\end{equation}
verifies 
\[
\beta(x_0)=\beta^{*}(x_0)=\beta^{**}(x_0)=\frac{2\delta-1}{4} 
\qquad\text{for }\quad\delta>\frac 12.
\]
This is a consequence of Theorem~\ref{pr:modforms} with $\alpha=\delta/2$ since it is known \cite{SeSt} that $f(z)=\sum c_n e(n^2z)$ is a cusp form of weight $r=1/2$ for the group $\Gamma_0(576)$. Taking $\delta =2$ (figure on the left) we obtain something that resembles the Riemann's example (figure on the right) that also corresponds to a theta function of weight $r=1/2$. 
\begin{center}
\begin{tabular}{c}
\includegraphics[width=54mm,height=34mm]{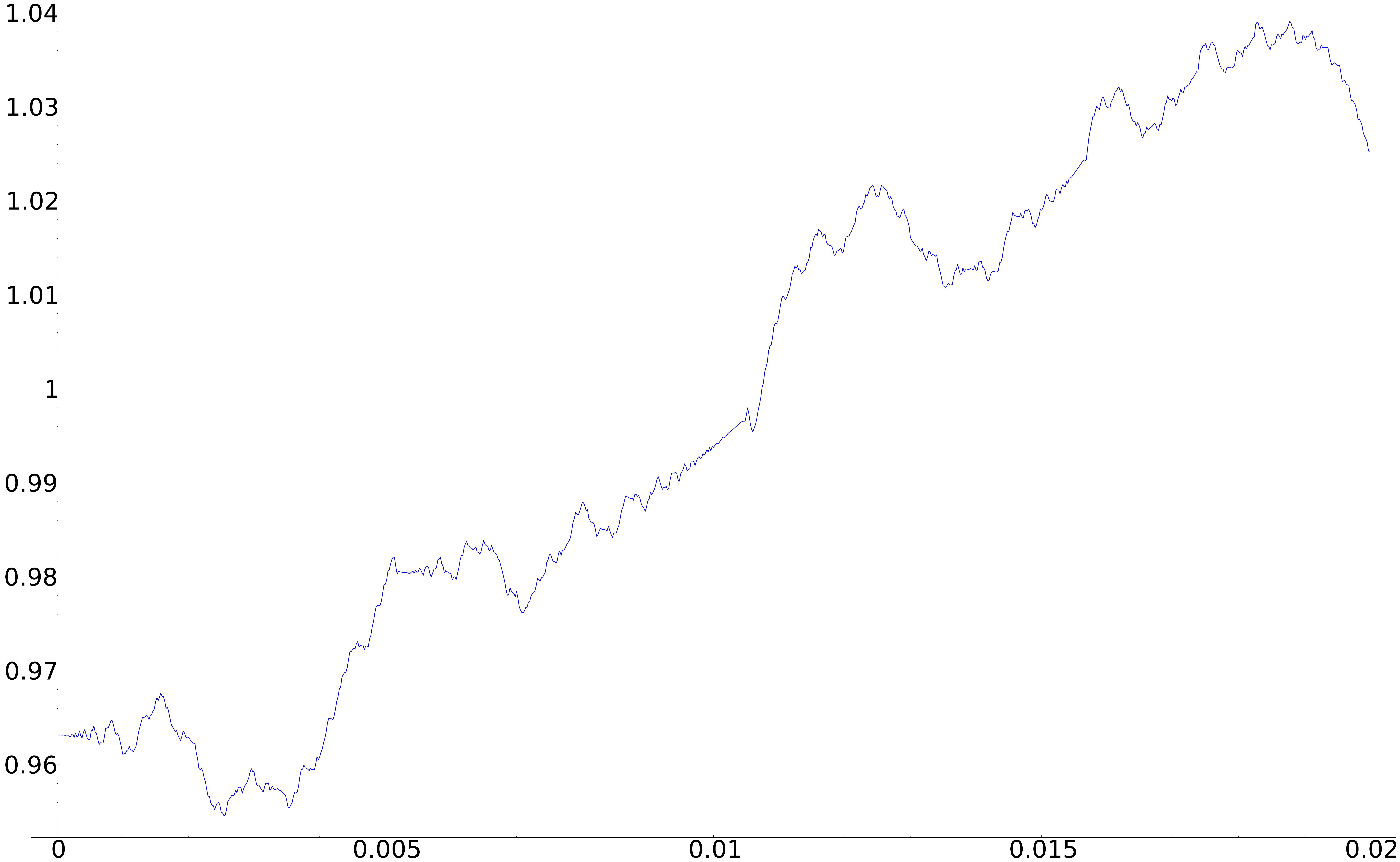}
\\
{\sc Fig.\;4}\quad
\sf Theta cusp form $[0,0.02]$
\end{tabular}
\begin{tabular}{c}
\includegraphics[width=54mm,height=34mm]{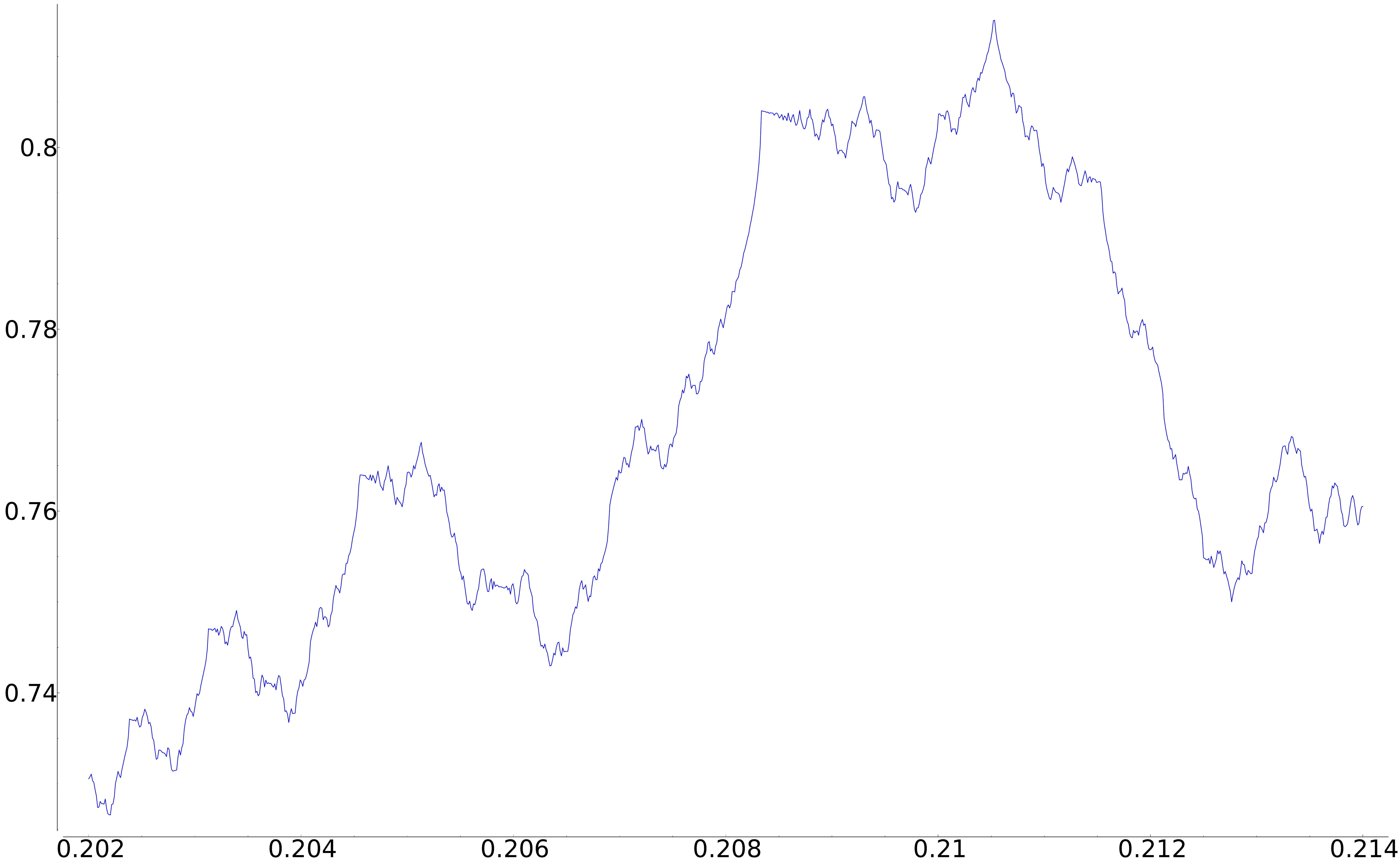}
\\
{\sc Fig.\;5}\quad
\sf Riemann's example (detail) 
\end{tabular}
\end{center}

\medskip

But we know by Corollary~\ref{cr:modforms} that $F$ is not a multifractal and the main result of \cite{jaffard} proves that Riemann's example is a multifractal. 

\

In some cases we can deduce a complete determination of the spectrum of singularities. 

The polynomial $P(x,y)= x^4+y^4-6x^2y^2$ is harmonic ($\Delta P = 0$) and the theory (see \cite[p.14]{sarnak}) assures that
\[
f(z)=
\sum_{n\in\Z}
\sum_{m\in\Z}
P(n,m)e\big((n^2+m^2)z\big)
\]
is a cusp form of weight $\deg P +2\cdot 1/2=5$. 

Consider the function 
\[
F(x)=
\sum_{n^2+m^2\ne 0}
\frac{n^4+m^4-6n^2m^2}{(n^2+m^2)^{13/4}}\cos\big((n^2+m^2)x\big).
\]
Its graph for $|x|\le 1/2$ is
\begin{center}
\begin{tabular}{c}
\includegraphics[width=90mm,height=60mm]{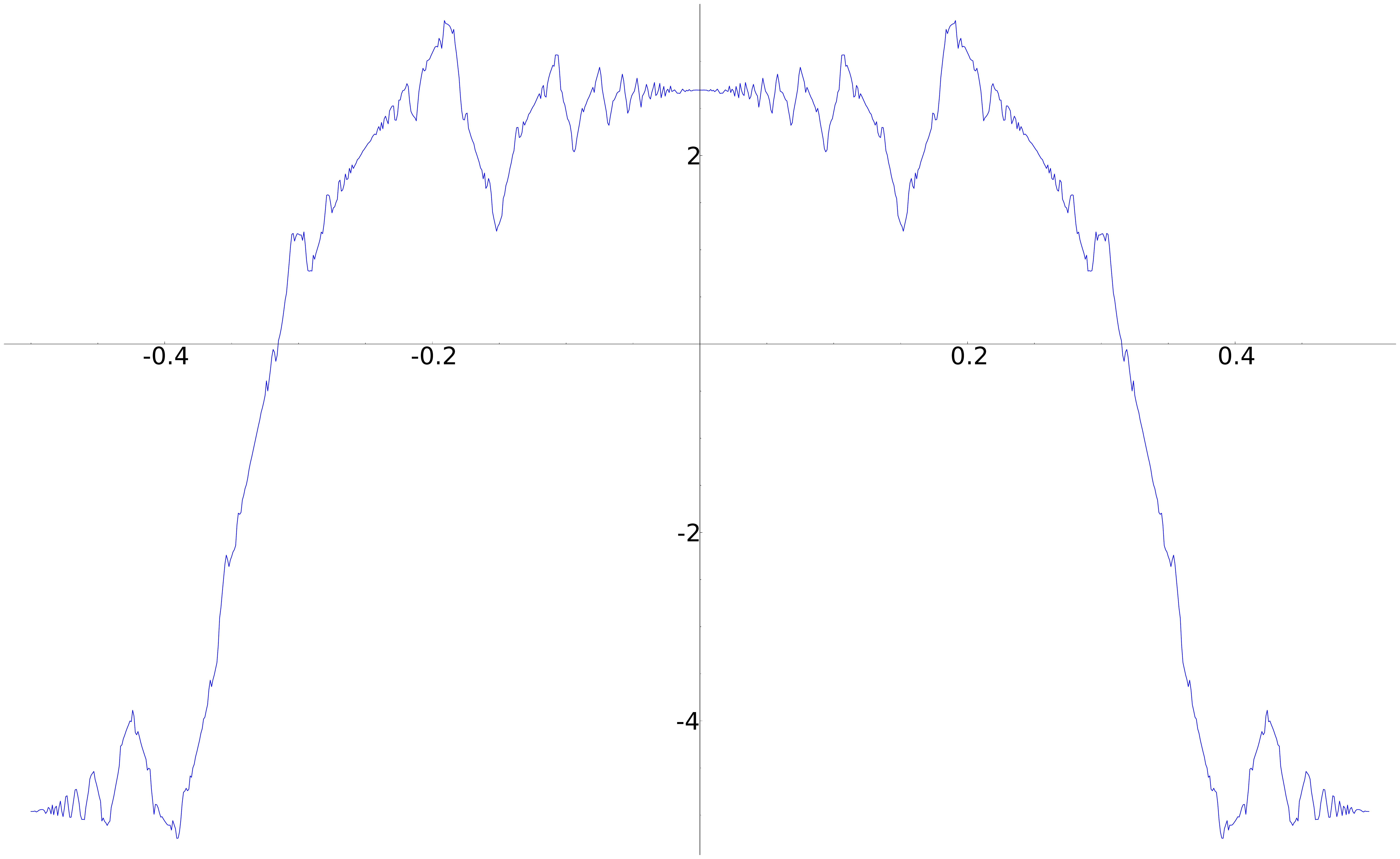}
\\
{\sc Fig.\;6}\quad
\sf Graph of $\sum_{n^2+m^2\ne 0}
\frac{n^4+m^4-6n^2m^2}{(n^2+m^2)^{13/4}}\cos\big((n^2+m^2)x\big)$
\end{tabular}
\end{center}

\medskip

Note that $F(x)=f_{13/4}^c(x)$. Theorem~\ref{pr:cusp_rat} implies $\beta(x_0)=3/2$ for $x_0\in\Q$ and using Theorem~\ref{pr:modforms} for the irrationals, we get
\[
 d_F(\delta)=
 \begin{cases}
  1& \text{if }\delta=3/4
  \\
  0& \text{if }\delta=3/2
  \\
  -\infty& \text{otherwise.}
 \end{cases}
\]

\

We finish with two examples that do not come from cusp forms.

The complex analog of Riemann's example $F(x)=\sum_{n=1}^\infty n^{-2}e(n^2x)$ corresponds, up to $1/2$ factor, to 
$f(z)=\sum_{n\in\Z} e(n^2z)$ and $\alpha=1$. It is well-known \cite{iwaniec} that $f$ is an automorphic form of weight $r=1/2$. The underlying group is $\Gamma_0(4)$ and the cusps are always equivalent to $0$, $1/4$ or $1/2$ under the action of this group, being $f$ cuspidal at $1/2$ and not cuspidal at $0$ and $1/4$. The orbits of these latter numbers run on the irreducible fractions $p/q$ such $q$ is odd or $4|q$. Theorem~\ref{pr:notcuspidal}  assures that $f\in C^{1/2}(x_0)$ at those points (see \cite{duistermaat} for some graphs).

A simpler example related to \cite{petrykiewicz1} is $F(x)=\sum_{n=1}^\infty n^{-\alpha}\sigma_{k-1}(n)e(nx)$ with $k$ even, $\alpha>k>2$ and $\sigma_{k-1}(n)=\sum_{d\mid n} d^{k-1}$. It is associated to the classical Eisenstein series of weight $k$ for the full modular group \cite{iwaniec}.
Clearly it is not cuspidal at~$0$ and as all the rational numbers are in the same orbit, by Theorem~\ref{pr:notcuspidal} we have $F\in C^{\alpha-k}(x_0)$ for every $x_0\in\Q$. The same result holds for $\Re(F)$ and $\Im(F)$.

\bibliography{bibarts}{}
\bibliographystyle{alpha}

\end{document}